\pgfplotsset{width=10cm,compat=1.9}
\tikzset{>=latex} 
\colorlet{myred}{red!80!black}
\colorlet{myblue}{blue!80!black}
\colorlet{mygreen}{green!60!black}
\colorlet{myorange}{orange!70!red!60!black}
\colorlet{mydarkred}{red!30!black}
\colorlet{mydarkblue}{blue!40!black}
\colorlet{mydarkgreen}{green!30!black}
\colorlet{mylime}{lime!80!black}
\tikzset{
  >=latex, 
  node/.style={thick,circle,draw=myblue,minimum size=22,inner sep=0.5,outer sep=0.6},
  node in/.style={node,green!20!black,draw=mygreen!30!black,fill=mygreen!25},
  node hidden/.style={node,blue!20!black,draw=myblue!30!black,fill=myblue!20},
  node hiddenrelu/.style={node,orange!20!black,draw=myorange!30!black,fill=myorange!20},
  node hiddensigmoid/.style={node,lime!20!black,draw=mylime!30!black,fill=mylime!20},
  node out/.style={node,red!20!black,draw=myred!30!black,fill=myred!20},
  connect/.style={thick,mydarkblue}, 
  connect arrow/.style={-{Latex[length=4,width=3.5]},thick,mydarkblue,shorten <=0.5,shorten >=1},
  node 1/.style={node in}, 
  node 2/.style={node hidden},
  node 3/.style={node hidden},
  node 4/.style={node hiddensigmoid},
  node 5/.style={node out},
}
\def\nstyle{int(\lay<\Nnodlen?min(3,\lay):5)} 
\def\n2style{int(\lay<\Nnodlen+4?min(4,\lay-1):5)} 
\numberwithin{equation}{section}
\newcommand{\udef}{\mathrel{\mathop:}=}
\newcommand{\R}{\mathbb{R}}
\newcommand{\N}{\mathbb{N}}
\newcommand{\de}{\,\mathrm{d}}
\newcommand{\sgn}{\mathrm{sgn}}
\newcommand{\D}{\mathcal{D}}
\newcommand{\Phiin}{\Phi_i(\theta^{(n)})}
\theoremstyle{plain}
\newtheorem{thm}{Theorem}[section]
\newtheorem{cor}[thm]{Corollary}
\newtheorem{prop}[thm]{Proposition}
\newtheorem{lem}[thm]{Lemma}
\newtheorem{defn}[thm]{Definition}
\newtheorem{rem}[thm]{Remark}
\newtheorem{exm}[thm]{Example}
\newcommand{\bmat}[1]{ \begin{bmatrix}#1\end{bmatrix}}
\let\tilde\widetilde
\let\hat\widehat
\newcommand{\parder}[2]{\frac{\partial#1}{\partial#2}}
\newcommand{\muPL}{\mu-\textrm{PL}^*}
\renewcommand{\ss}{\scriptstyle}
\def\svdots{\vbox{\baselineskip=1.5pt\lineskiplimit=0pt
	\kern1.5pt \hbox{$\ss .$}\hbox{$\ss .$}\hbox{$\ss .$}}}
\begin{document}

\title{PHYSICS INFORMED NEURAL NETWORKS FOR LEARNING THE HORIZON SIZE IN BOND-BASED PERIDYNAMIC MODELS}
\author[Difonzo]{Fabio V. Difonzo}
\address{Istituto per le Applicazioni del Calcolo \textquotedblleft Mauro Picone\textquotedblright, Consiglio Nazionale delle Ricerche, Via G. Amendola 122/I, 70126 Bari, Italy}
\email{fabiovito.difonzo@cnr.it}
\address{Departement of Engineering, LUM University Giuseppe Degennaro, S.S. 100 km 18, 70010 Casamassima (BA), Italy}
\email{difonzo@lum.it}
\author[Lopez]{Luciano Lopez}
\address{Dipartimento di Matematica, Universit\`a degli Studi di Bari Aldo Moro, Via E. Orabona 4, 70125 Bari, Italy}
\email{luciano.lopez@uniba.it}
\author[Pellegrino]{Sabrina F. Pellegrino}
\address{Dipartimento di Ingegneria Elettrica e dell'Informazione, Politecnico di Bari, Via E. Orabona 4, 70125 Bari, Italy}
\email{sabrinafrancesca.pellegrino@poliba.it}

\subjclass{34A36, 15B99}

\keywords{Physics Informed Neural Network, Bond-Based Peridynamic Theory, Horizon}

\null\hfill Version of \today $, \,\,\,$ \xxivtime

\begin{abstract}
This paper broaches the peridynamic inverse problem of determining the horizon size of the kernel function in a one-dimensional model of a linear microelastic material. We explore different kernel functions, including V-shaped, distributed, and tent kernels. The paper presents numerical experiments using PINNs to learn the horizon parameter for problems in one and two spatial dimensions. The results demonstrate the effectiveness of PINNs in solving the peridynamic inverse problem, even in the presence of challenging kernel functions. We observe and prove a one-sided convergence behavior of the Stochastic Gradient Descent method towards a global minimum of the loss function, suggesting that the true value of the horizon parameter is an unstable equilibrium point for the PINN's gradient flow dynamics.
\end{abstract}

\maketitle

\pagestyle{myheadings}
\thispagestyle{plain}
\markboth{F.V. DIFONZO, L. LOPEZ AND S.F. PELLEGRINO}{PINNs FOR THE LEARNING THE HORIZON SIZE IN BOND-BASED PERIDYNAMIC MODELS}

\section{Introduction to the peridynamic inverse problem}

Peridynamics is an alternative theory of solid mechanics introduced by Silling in~\cite{SILLING2000} with the aim to reformulate the basic mathematical description of the motion of a continuum in such a way that the identical equations hold either on or off of a jump discontinuity such as a crack. The theory was developed to answer several engineering problems such as the monitoring of the structural damage of an aircraft components and several benchmark engineering problems can be found in literature, see for instance~\cite{MadenciBook2014}.

The theory accounts for the nonlocal interactions among particles located within a region of finite distance, whose size is parametrized by a positive constant value $\delta$. This length-parameter is related to the characteristic length-scale of the material under consideration. Damage is incorporated in the theory at the level of these interactions by particles, so fractures occur as a natural outgrowth of the equation of motion.
In the bond-based peridynamic formulation, the nonlocal interaction between two material particles is called bond and is modeled as a spring between the two points. This represents the main fundamental difference between peridynamics and classical theory, where interactions occur only in presence of direct contact forces.

From a mathematical point of view, partial derivatives are replaced by an integral operator such that the acceleration of any particle $x$ in the reference configuration at any time $t$ is given by
\begin{equation}
\label{eq:bondperid}
\parder{^2u}{t^2}(x,t)=\int_{B_{\delta}(x)}f\left(u(y,t)-u(x,t),y-x\right)\,\de y,
\end{equation}
where $u$ is the displacement field and $f$ is a pairwise force function whose value is the force per unit volume squared that the particle $y$ exerts on the particle $x$. If we consider microelastic materials, we can assume that the pairwise force function $f$ takes the form
\begin{equation}
\label{eq:f}
f\left(u(y,t)-u(x,t),y-x\right) = C(|x-y|) \left(u(x,t)-u(y,t)\right),
\end{equation}
where $C$ is the material's micromodulus function representing the kernel function governing the interaction's strength.

In this paper, we consider the one-dimensional case model of the dynamic response of an infinite bar composed of a linear microelastic material, described by the following PDE in peridynamic formulation:
\begin{equation}\label{eq:periPDE}
\parder{^2u}{t^2}(x,t)=\int_{\R}C(|x-y|)[u(x,t)-u(y,t)]\,\de y,
\end{equation}
where $C:\R\to\R$ represents the so-called kernel function. We further guarantee the consistency with Newton's third law by requiring that $C$ be nonnegative and even:
\[
C\left(\xi\right)=C\left(-\xi\right),\quad \xi\in\R.
\]

As a result of the assumption of long-range interactions, the motion is dispersive and by examining the steady propagation of sinusoidal waves characterized by an angular frequency $\omega$, a wave number $k$ and a phase speed $c=\frac{\omega}{k}$, we find the following dispersive relation
\begin{equation}
\label{eq:dispersive}
\omega=\omega(k)=\sqrt{M(k)},\quad\text{where }M(k)\udef\int_{\R}\left(1-\cos(k\xi) C(\xi)\right) \de\xi.
\end{equation}
Additionally, it is reasonable assume that interactions between two material particles becomes negligible as the distance among them becomes large. Thus, we have
\begin{equation}
\label{eq:Clim}
\lim_{\xi\to\pm\infty} C(\xi) = 0.
\end{equation}

If a material is characterized by a finite horizon, so that no interactions happen within particles that have relative distance greater than $\delta$, then we can assume that the support of the kernel function is given by $[-\delta,\delta]$ and in this case equation~\eqref{eq:Clim} is automatically satisfied. Moreover, under such assumption, the model~\eqref{eq:periPDE} writes as
\begin{equation}
\label{eq:periPDEsupport}
\parder{^2u}{t^2}(x,t)=\int_{B_{\delta}(x)}C(|x-y|)[u(x,t)-u(y,t)]\,\de y.
\end{equation}
From a physical point of view, the function $C$ characterizes the stiffness of a material in presence of long-range forces and involves a length-scale parameter $\delta$ which represents a measure of the nonlocality degree of the model able to capture of the dispersive effects of the long-range interactions. We can, thus, assume that for linear microelastic material
\[
C = C\left(|x-x'|;\delta\right).
\]
In the limit case of short-range interactions, namely in the case $\delta\to 0$, the peridynamic theory converges to the classic elasticity theory, see~\cite{WECKNER2005}. Hereafter, $C$ will be always assumed to be compactly supported.

We augment equation~\eqref{eq:periPDEsupport} by two initial conditions
\begin{equation}
\label{eq:IC}
u(x,0)=u_0(x),\qquad \frac{\partial u}{\partial t}(x,0) = v_0(x), \qquad x\in\Omega,
\end{equation}
then the initial-value problem~\eqref{eq:periPDEsupport}-\eqref{eq:IC} is well-posed (see~\cite{Emmrich_Puhst_2015}) with possible dispersive behaviors of the solution as a consequence of long-range forces in the following functional space.

Let $X=\mathcal{C}_b^1\left(\Omega\right)$ be the space of bounded continuous and differentiable functions or $X=W^{1,p}\left(\Omega\right)$, with $1\le p \le \infty$, then the following Theorem holds.
\begin{thm}[see~\cite{Emmrich_Puhst_2015}]
\label{th:wellposedness}
Let the initial data in~\eqref{eq:IC} be given in $X$ and assume $C\in L^1(\R)$. Then the initial-value problem associated with~\eqref{eq:periPDEsupport} is locally well-posed with solution in $\mathcal{C}^2(X;[0,T])$, for any $T>0$.
\end{thm}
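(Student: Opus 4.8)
The plan is to recast the second-order equation~\eqref{eq:periPDEsupport} as a first-order evolution system in the Banach space $Y\udef X\times X$ and to exploit the fact that, under the hypothesis $C\in L^1(\R)$, the spatial part is governed by a \emph{bounded} linear operator, so that the classical Picard--Lindel\"of theory for ODEs in Banach spaces applies almost verbatim. Setting $v\udef\parder{u}{t}$, problem~\eqref{eq:periPDEsupport}--\eqref{eq:IC} becomes
\begin{equation*}
\der{}{t}\bmat{u\\v}=A\bmat{u\\v},\quad A\bmat{u\\v}\udef\bmat{v\\\mathcal{L}u},\quad \bmat{u\\v}\Big|_{t=0}=\bmat{u_0\\v_0},
\end{equation*}
where, using that $C$ is even and supported in $[-\delta,\delta]$, the operator $\mathcal{L}$ splits as
\[
(\mathcal{L}u)(x)=\int_{B_\delta(x)}C(|x-y|)[u(x)-u(y)]\,\de y=a(x)\,u(x)-(C*u)(x),
\]
with $a(x)\udef\int_{B_\delta(x)}C(|x-y|)\,\de y$ and $*$ denoting convolution against $C$.

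First I would show that $A\in\mathcal B(Y)$, which reduces to proving that $\mathcal{L}\colon X\to X$ is bounded. The multiplication term is harmless, since $0\le a(x)\le\|C\|_{L^1(\R)}$ uniformly in $x$, so it acts boundedly on either choice of $X$. For the convolution term, Young's inequality gives $\|C*u\|_{L^p}\le\|C\|_{L^1}\|u\|_{L^p}$, and since differentiation commutes with convolution, $(C*u)'=C*u'$, so that $u\mapsto C*u$ is bounded on $W^{1,p}(\Omega)$ with norm at most $\|C\|_{L^1}$. In the case $X=\cont_b^1(\Omega)$ boundedness follows from $\|C*u\|_\infty\le\|C\|_{L^1}\|u\|_\infty$, while continuity of $C*u$ and of $(C*u)'=C*u'$ is obtained by dominated convergence, using continuity of $u$ and $u'$ together with the $L^1$-domination $2\|u\|_\infty\,|C|$.

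With $A$ bounded in hand, the initial-value problem is equivalent to the Volterra fixed-point equation $w(t)=w_0+\int_0^t A\,w(s)\,\de s$, whose right-hand side is a contraction on $\cont([0,T_0];Y)$ for $T_0$ small enough; the Banach fixed-point theorem then yields a unique local solution, equivalently $w(t)=e^{tA}w_0$, with the exponential of the bounded operator $A$ converging in operator norm. Because the Lipschitz constant $\|A\|$ is uniform in time, the local solution extends to every interval $[0,T]$, giving global existence and uniqueness. Regularity in time is then recovered by bootstrapping: $w\in\cont^1$ together with $w'=Aw$ and continuity of $A$ force $w'\in\cont^1$, hence $w\in\cont^2$ (indeed $\cont^\infty$), so in particular the displacement satisfies $u\in\cont^2(X;[0,T])$ as claimed.

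I expect the only genuinely delicate point to be the verification that $\mathcal{L}$ maps $X$ into itself—specifically the continuity, not merely the boundedness, of $C*u$ and its derivative in the $\cont_b^1$ setting, and the commutation $(C*u)'=C*u'$ in the $W^{1,p}$ setting—since once $A\in\mathcal B(Y)$ is secured everything downstream is the textbook theory of linear ODEs with a bounded generator.
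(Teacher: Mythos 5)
The paper itself contains no proof of this theorem: it is imported directly from the cited reference of Emmrich and Puhst, so there is no internal argument to compare against. Your proof is correct and is, in substance, the standard route taken in that reference for the linear microelastic case: since $C\in L^1(\R)$, the peridynamic operator $\mathcal{L}u=au-C*u$ is a \emph{bounded} linear operator on $X$, the first-order reformulation has a bounded generator $A$ on $X\times X$, and Picard--Lindel\"of (equivalently, the norm-convergent series for $e^{tA}$) yields a unique solution on every $[0,T]$, with $u\in\mathcal{C}^2([0,T];X)$ by exactly the bootstrap you describe. Note that your argument actually delivers \emph{global} well-posedness, which is consistent with (and sharper than) the theorem's somewhat loose phrasing ``locally well-posed \dots for any $T>0$''. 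Your handling of the delicate points is sound: Young's inequality and $(C*u)'=C*u'$ for weak derivatives settle the $W^{1,p}$ case, and dominated convergence (or continuity of translations in $L^1$) settles continuity of $C*u$ and $(C*u)'$ in the $\mathcal{C}_b^1$ case.

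One point you should make explicit: the claim that the multiplication term $a(x)u(x)$ ``acts boundedly on either choice of $X$'' is only immediate because, in the infinite-bar setting $\Omega=\R$ of the paper, $a(x)=\int_{B_\delta(x)}C(|x-y|)\,\de y=\|C\|_{L^1(\R)}$ is \emph{constant} in $x$. On a proper subdomain, where the integration region becomes $B_\delta(x)\cap\Omega$, the function $a$ is merely uniformly continuous for general $C\in L^1$, and multiplication by $a$ then need not preserve $W^{1,p}(\Omega)$ or $\mathcal{C}_b^1(\Omega)$ (one would need $a\in W^{1,\infty}$, respectively $a\in\mathcal{C}_b^1$, which fails for rough kernels). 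So your proof is complete as written for $\Omega=\R$, which is the setting the theorem is used in here, but the constancy of $a$ is doing real work and deserves a sentence rather than being absorbed into ``harmless''.
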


It is clear that a different microelastic material corresponds to a different kernel function and, as a consequence, the kernel function involved in the model provides different constitutive models.

Among the numerous proposals of kernel functions in literature of peridynamic theory, according to~\cite{WECKNER2005} we will particularly draw our attention on Gauss-type kernels of the form
\begin{equation}\label{eq:gaussKernel}
C(\xi)=\lambda e^{-\mu \xi^2},\qquad \lambda,\,\mu>0,
\end{equation}
or on V-shaped kernels of the type
\begin{equation}\label{eq:VKernel}
C(\xi)=
\begin{cases}
    \lambda|\xi|,\quad&|\xi|\leq\delta,\\
    0,\quad&|\xi|>\delta,
\end{cases}\qquad \lambda>0.
\end{equation}
Moreover, we will consider a distributed kernels function with shape
\begin{equation}\label{eq:UKernel}
C(\xi)=
\begin{cases}
    \frac{|\xi|-\lambda+\delta}{\delta},\quad&|\xi|\ge \lambda-\delta,\\
    0,\quad&|\xi|< \lambda-\delta,
\end{cases}\qquad \lambda>\delta,
\end{equation}
proposed in~\cite{BDFP} in nonlocal unsaturated soil model contexts. \\
Further, we consider tent kernel of the form
\begin{equation}\label{eq:tentKernel}
C(\xi)=\max\{0,\delta-|\xi|\},
\end{equation}
that are commonly considered in typical peridynamic applications, (see for instance~\cite{silling2011}). The kernel functions of interest are depicted in Figure \ref{fig:influenceFunction}.

\begin{figure}
\centering
%
\begin{subfigure}{.32\textwidth}
\begin{tikzpicture}[
  declare function={
    func(\x)= abs(\x);
  }
]
\begin{axis}[
    width = \linewidth,
    axis lines = left,
    xlabel = \(\xi\),
    ylabel = {\(C(\xi)\)},
    ymin = 0,
]

\addplot[
    domain=-10:10,
    samples=200,
    color=black,
] {func(x)};
\end{axis}
\end{tikzpicture}
\end{subfigure}
\begin{subfigure}{.32\textwidth}
\begin{tikzpicture}[
  declare function={
    func(\x)= (abs(\x)-7+1);
  }
]
\begin{axis}[
    width = \linewidth,
    axis lines = left,
    xlabel = \(\xi\),
    ylabel = {\(C(\xi)\)},
    ymin = 0,
]

\addplot[
    domain=-10:10,
    samples=200,
    color=black,
] {func(x)};
\end{axis}
\end{tikzpicture}
\end{subfigure}
\begin{subfigure}{.32\textwidth}
\begin{tikzpicture}[
  declare function={
    func(\x)= (max(0,8-abs(\x));
  }
]
\begin{axis}[
    width = \linewidth,
    axis lines = left,
    xlabel = \(\xi\),
    ylabel = {\(C(\xi)\)},
    ymin = 0,
]

\addplot[
    domain=-10:10,
    samples=200,
    color=black,
] {func(x)};
\end{axis}
\end{tikzpicture}
\end{subfigure}
\caption{Qualitative behaviors of kernel functions defined in 
\eqref{eq:VKernel} with $\lambda=1,\delta=10$, \eqref{eq:UKernel} with $\lambda=7,\delta=1$ and \eqref{eq:tentKernel} with $\delta=8$, respectively.}
\label{fig:influenceFunction}
\end{figure}
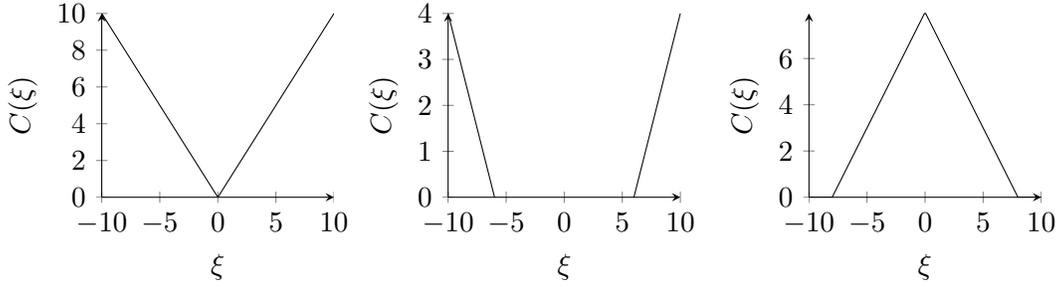

In this paper, we aim to solve the inverse problem described in \eqref{eq:periPDEsupport} for determining the support $[-\delta,\delta]$ of the kernel function $C$, resorting to the learning process provided by a standard Physics Informed Neural Network (PINN).
More specifically, we focus on determining the horizon size $\delta$ of the kernel function within a one-dimensional peridynamic model of a linear microelastic material, testing various kernel types (V-shaped, distributed, and tent) across one- and two-dimensional problems. We provide novel insights into the optimization process, demonstrating a one-sided convergence behavior of the Stochastic Gradient Descent (SGD) optimizer, suggesting that the true horizon value acts as an unstable equilibrium in the PINN gradient flow dynamics. It emphasizes PINN robustness in parameter learning and highlights optimization characteristics unique to the horizon parameter, addressing convergence and stability in PINN optimization for horizon size estimation.
\\
As a consequence, we are not interested in solving the forward problem of determining the solution $u(x,t)$ to \eqref{eq:periPDE}, even though such numerical approximation would be an ancillary product of the proposed PINN. It is worth stressing that the current research differs from \cite{DLP2024} in that here we focus on learning the horizon parameter $\delta$ in a peridynamic context using PINNs, rigorously proving through ad hoc theoretical results the convergence behavior of the SGD method; on the other hand, in \cite{DLP2024} we introduce RBFs to enhance PINN performance for learning the peridynamic kernel function $C(\xi)$, emphasizing physically meaningful solutions, solely focusing on the architectural structure of the serialized PINN proposed to tackle the inverse problem learning the kernel function.

The manuscript is organized as follows. Section~\ref{sec:PINNsoverview} states the problem and describes PINN's architecture we proposed to learn the horizon size of the model. In Section~\ref{sec:convergence} we analyze the relationship between the horizon and the learning process for the PINN realization, proving that the convergence to the horizon limit value, which is a global minimum provided the neural network is wide enough, occurs monotonically if the neural network becomes more insensitive to the parameter change. Section~\ref{sec:results} is devoted to numerical experiments confirming the theoretical results and showing a good capability of the proposed PINN to learn the horizon size for different choice of kernel functions both for 1D and 2D inverse problems. Finally, Section~\ref{sec:conclusion} concludes the paper.

\section{Overview on PINNs}\label{sec:PINNsoverview}

Physics-informed neural networks (PINNs) are a recent advancement that tackle problems governed by partial differential equations (PDEs) (e.g., \cite{Galvanetto2018} for finite element analysis). These architectures integrate physical laws directly into the machine learning framework, offering a promising approach for complex systems. PINNs can be employed for both direct problems (finding solutions with specified initial and boundary conditions) and inverse problems (determining unknown parameters based on observations).

Traditional methods for direct problems, such as finite element analysis (e.g., \cite{Galvanetto2018,ALEBRAHIM2023116034}), finite difference methods with composite quadrature formulas (e.g., \cite{LP2021}), and spectral methods (e.g., \cite{LP,Jafarzadeh,LPeigenv,Jafarzadeh2023}), often require significant computational resources and may loose the sparsity property of the stiffness matrix when applied to nonlocal models. Additionally, these methods might require knowledge of specific material properties (e.g., constitutive parameters, kernel functions) or struggle to enforce certain boundary conditions (e.g., \cite{SUKUMAR2022} proposes PINNs for complex geometries). An alternative approach to traditional methods is given by PINNs, which represent a recent suitable tool to address these issues, yet to be investigated and further deepened, both from a theoretical and a numerical point of view.

Peridynamic theory can also benefit from PINNs. Peridynamic formulations involve integral equations instead of traditional PDEs, and PINNs have been shown effective in solving these integral equations for problems in material characterization \cite{MadenciCMAME2023,DELIA2024,JAFARZADEH2024}. This highlights the versatility of PINNs beyond classical PDE-based problems.

Inverse problems, frequently encountered in real-world applications like medical imaging \cite{ChenEtAl20}, geophysics \cite{bandaiGhezzehei2022}, and material characterization \cite{XU2023,ALEBRAHIM2023116034,JafarzadehArxiv,DLP2024}, are inherently challenging due to potential existence of multiple solutions or no solutions at all. PINNs show promise in overcoming these difficulties, as seen in their application to various inverse problems \cite{Zhou2023,Zunino,MadenciCMAME2023,CaforioEtAl2024}. \\

In this paper we resort to a Feed-Forward fully connected Deep Neural Networks (FF-DNNs or simply NNs), also known as Multi-Layer Perceptrons (MLPs) (see \cite{bengio2003} and references therein). These networks are the results of the concatenation and the arrangement of artificial neurons into layers, and they approximate the solution space through a combination of affine linear maps and nonlinear activation functions $\rho:\R\to\R$ applied across hidden layers, with the independent variable feeding the network's input.

FF-DNNs employ a nested transformation approach where each layer's output serves as the input for the next. \\
Let $L>2$ and let us denote by $[L]\udef\{1,\ldots,L\}$. Mathematically, the realization $\Phi_a(x,\theta)$ of a deep NN with $L$ layers and $N_0$, $N_L$ and $N_l,l\in[L-1]$, representing neurons in the input, output and $l$-th hidden layer respectively, weight matrices $W^{(l)}\in\R^{N_l\times N_{l-1}}$, bias vectors $b\in\R^{N_l}$ and input $x\in\R^{N_0}$, can be expressed as
\begin{equation}\label{eq:NN}
\begin{aligned}
\Phi^{(1)}(x,\theta) &= W^{(1)}x+b^{(1)}, \\
\Phi^{(l+1)}(x,\theta) &= W^{(l+1)}\rho(\Phi^{(l)}(x,\theta))+b^{(l+1)},\quad l\in[L-1] \\
\Phi_a(x,\theta) &= \Phi^{(L)}(x,\theta),
\end{aligned}
\end{equation}
with the activation function $\rho$ being applied componentwise  (see Figure \ref{fig:NNstructure} for a graphical representation of a deep NN). Let us stress that the set of free parameters is
\[
\theta=((W^{(l)},b^{(l)}))_{l=1}^L\in\bigtimes_{l=1}^L\R^{N_l\times N_{l-1}}\times\R^{N_l}\equiv\R^{P(N)},
\]
where $P(N)\udef\sum_{l=1}^{L}N_lN_{l-1}+N_l$ represents the total number of parameters of the NN. Moreover, we define the width of the neural network $\Phi$ as
\[
m\udef\min_{l\in[L]}N_l.
\]
The final output can therefore be obtained by the composition:
\[
\Phi_a(x,\theta)=W^{(L)}\rho(W^{(L-1)}\cdots\rho(W^{(1)}x+b^{(1)})+\ldots+b^{(L-1)})+b^{(L)},\quad x\in\R^{N_0}.
\]
Sometimes, and provided it does not reduce readability, we will hide the dependence of $\Phi_a$ on $\theta$, and will simply write $\Phi_a(x)$. \\
Training PINNs (or, more generally, NNs) amounts to minimizing, with respect to the network's trainable parameters (weights and biases), a loss function that further incorporates the physics of the problem and not only the training data through the Stochastic Gradient Descent (SGD) method.

For a general PDE of the form $\mathcal{P}(u)=0$ (where $\mathcal{P}$ is the differential operator acting on function $u$), the PINN loss function typically takes the form:
\begin{equation}\label{eq:lossPINN}
\mathcal{L}(u,\theta)\udef \mathcal{R}_s(u-u^*,\theta)+\mathcal{R}_d({P}(u)-0^*,\theta),
\end{equation}
where, $u^*$ represents the training data and $0^*$ is the expected value for the differential operation at any training point. The residual functions $\mathcal{R}_s,\mathcal{R}_d$, usually chosen as mean squared error metrics \cite{RAISSIEtAl2019}, depend on the specific problem and functional space; in case of inverse problems, the functions $\mathcal{R}_s,\mathcal{R}_d$ typically depend on the parameter set $\theta$ solely. The first term enforces data fitting, and is referred to as \emph{empirical risk}, while the second term, the differential residual loss, ensures the network adheres to the governing physics. Further terms could be added to \eqref{eq:lossPINN} and enforce other specific properties of the sought solution. We refer to \eqref{eq:PINN_lossFunction} below for the specific form of both empirical risk and differential residual loss, as well as for the selection of $\mathcal{R}_s,\mathcal{R}_d$.

The operator $\mathcal{P}$ is often implemented using automatic differentiation (autodiff) techniques. In the context of peridynamics, a recent work by \cite{HAGHIGHAT2021} proposes a nonlocal alternative to autodiff, utilizing a Peridynamic Differential Operator (PDDO) for evaluating $u$ and its derivatives.

For a recent comprehensive review of PINNs and related theory, we refer to \cite{Cuomo2022}.

\begin{figure}
\centering
\begin{tikzpicture}[x=1.7cm,y=1.4cm]
\message{^^JNeural network, shifted}
\readlist\Nnod{2,4,4,4,1} 
\def\yshift{0.5} 

\message{^^J  Layer}
\foreachitem \N \in \Nnod{ 
\def\lay{\Ncnt} 
\pgfmathsetmacro\prev{int(\Ncnt-1)} 
\message{\lay,}
\foreach \i [evaluate={\c=int(\i==\N); \y=\N/2-\i-\c*\yshift;
             \x=\lay; \n=\nstyle;}] in {1,...,\N}{ 
  \node[node \n] (N\lay-\i) at (\x,\y) {}
  ;
  \ifnum\lay>1 
  \ifnum\lay<4
    \foreach \j in {1,...,\Nnod[\prev]}{ 
      \draw[connect arrow] (N\prev-\j) -- (N\lay-\i);
    }
  \fi
  \fi 
  \ifnum\lay>4
    \foreach \j in {1,...,\Nnod[\prev]}{ 
      \draw[connect arrow] (N\prev-\j) -- (N\lay-\i);
    }
  \fi
  \ifnum\lay=4
    \foreach \j in {1,...,\Nnod[\prev]}{ 
      \draw[dotted] (N\prev-\j) -- (N\lay-\i);
    }
  \fi

}
\ifnum\lay>1 \ifnum\lay<5
\path (N\lay-\N) --++ (0,1+\yshift) node[midway,scale=1.5] {$\vdots$};
\fi\fi
}

\node[left=0.6,align=center,mygreen!60!black] at (N1-1.0) {space\\[-0.2em]input\\[-0.2em]$x$};
\node[left=0.6,align=center,mygreen!60!black] at (N1-2.0) {time\\[-0.2em]input\\[-0.2em]$t$};
\node[below=0.2,align=center,myblue!60!black] at (N3-4.-90)
{
hidden layers\\[-0.2em]$\theta=\{W_1,b_1,\dots,W_l,b_l,\dots,W_L,b_L\}$};
\node[right=0.2,align=center,myred!60!black] at (N\Nnodlen-1.0) {output\\[-0.2em]layer\\[-0.2em]$u_{NN}(x,t;\theta)$};

\end{tikzpicture}
\caption{PINN structure used in this work, with $L$ layers, $N_l$ neurons per layer, $l=0,\ldots,L$.}

\label{fig:NNstructure}
\end{figure}
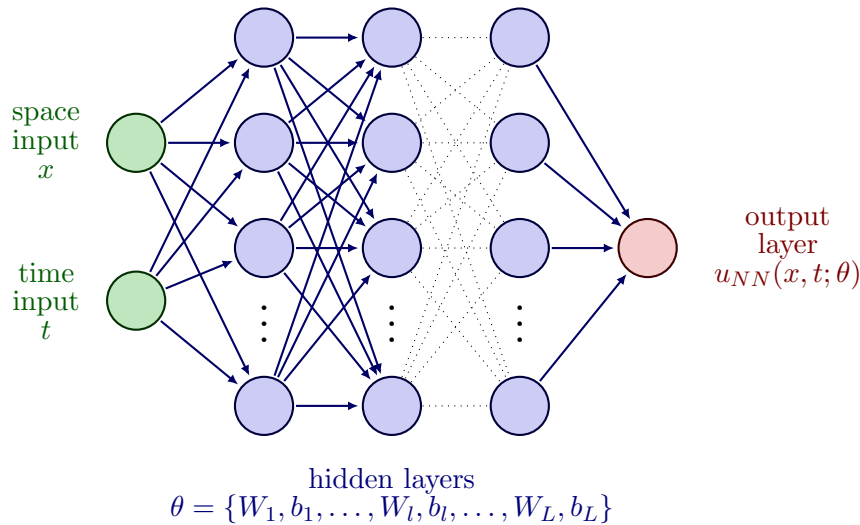

\section{One-sided convergence of the horizon learning process}
\label{sec:convergence}

In this section, we want to analyze how the horizon $\delta$ behaves over the learning process of our PINN realization $\Phi\in\mathcal{F}$, being $\mathcal{F}$ a given class of NN predictors, whose features will be specified later. \\

First, given the training dataset $(x,t,u)\in\R^{N_x}\times\R^{N_t}\times\R^{N_x\times N_t}$, let us rearrange the data, by applying a suitable meshing on $(x,t)$, so that, letting $N\udef N_xN_t$, the neural network realization is the function
\[
\Phi:\R^N\times\R^N\times\R^{P(N)+1}\to\R^N,
\]
where $P(N)$ represents the total number of PINN parameters $\theta=\bmat{\hat\theta \\ \delta}\in\R^{P(N)+1}$, with $\theta_{P(N)+1}\udef\delta\in\R$ and $\hat\theta\in\R^{P(N)}$.
We want to show that the peridynamic model \eqref{eq:periPDE} presents a one-sided convergence for $\delta$, as proved in Theorem \ref{thm:estimate}, and as exemplified by experiments in Section \ref{sec:results}. This will in turn imply that the limit value of the horizon parameter is an unstable equilibrium for the gradient flow process (see, e.g., \cite{Grohs_Kutyniok_2022}) governing $\delta$. \\
Let us then define the loss function \eqref{eq:lossPINN} as
\begin{equation}\label{eq:PINN_lossFunction}
\mathcal{L}(\theta)\udef\frac{1}{2}\left(\sum_{i=1}^{N}|\Phi(x_i,t_i;\theta)-u_i|^2+\sum_{i=1}^{N}|\D(\Phi(x_i,t_i;\theta))|^2\right),
\end{equation}
where, for each input $(x,t)$ in the training dataset, we let the differential residual $\D(\Phi(x,t;\theta))$ be defined as
\begin{equation}\label{eq:PINN_diffRes}
\D(\Phi(x,t;\theta))\udef\parder{^2\Phi}{t^2}(x,t;\theta)-\int_{x-\delta}^{x+\delta}C(x-y)(\Phi(x,t;\theta)-\Phi(y,t;\theta))\,\de y.
\end{equation}
Thus, we want to solve the optimization problem
\begin{equation}\label{eq:minLoss}
\min_{\theta\in\R^{P(N)+1}}\mathcal{L}(\theta),
\end{equation}
with a specific interest for the $(P(N)+1)$-st component of the optimal solution, namely  the parameter $\delta$, representing the peridynamic horizon which, as it will be proven later in this section, is supposed to converge to the true value $\delta^*>0$ we are seeking for. The SGD method applied to the optimization problem \eqref{eq:minLoss} is the iterative process
\begin{equation}\label{eq:sgd}
\theta^{(n+1)}=\theta^{(n)}-\frac{\eta}{2}\left(\nabla_\theta|\Phi(x_i,t_i;\theta^{(n)})-u_i|^2+\nabla_\theta|\D(\Phi(x_i,t_i;\theta^{(n)}))|^2\right),
\end{equation}
where $i$ is uniformly sampled from $\{1,\ldots,N\}$ at each iteration $n\in\N,\,n\geq0$, while $\eta>0$ is the learning rate. \\
In order to perform our analysis, we need some assumptions on the neural network $\Phi$ for which we want an optimal realization relative to \eqref{eq:minLoss}. For sake of simplicity, we will write $\Phi(\theta)$ instead of $\Phi(x,t,\theta)$ if not required by the context. If not otherwise specified, the vector norm is meant to be the Euclidean norm; for matrices, we will make use of the Frobenius norm $\|\cdot\|_\textup{F}$. \\
We first need some definitions.
\begin{defn}\label{def:Lip}
A function $f:\R^p\to\R^q$ is $L_f$-Lipschitz, if there exists $L_f>0$ such that for every $\theta,\sigma\in\R^p$
\[
\|f(\theta)-f(\sigma)\|\leq L_f\|\theta-\sigma\|.
\]
\end{defn}
\begin{defn}\label{def:smooth}
A function $f:\R^p\to\R^q$ is $\beta_f$-smooth if it is differentiable and there exists $\beta_f>0$ such that for every $\theta,\sigma\in\R^p$
\[
\|f(\theta)-f(\sigma)-\nabla f(\theta)(\theta-\sigma)\|\leq \frac{\beta_f}{2}\|\theta-\sigma\|^2.
\]
\end{defn}

If $F$ is smooth enough, then we have an easy sufficient condition to check $\beta$-smoothness.
\begin{lem}\label{lem:smooth}
If a function $f:\R^p\to\R^q$ is twice differentiable, then $f$ is $\|H_f\|_\textup{F}$-smooth, where $H_f$ is the Hessian of $f$.
\end{lem}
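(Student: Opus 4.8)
The plan is to recognize this as a direct consequence of the second-order Taylor expansion combined with the elementary matrix-norm inequality $\|A\|_2\le\|A\|_{\textup{F}}$. A preliminary remark I would make is that, since the Hessian varies with the base point, $\|H_f\|_{\textup{F}}$ must be read as a uniform bound over the relevant domain, e.g.\ $\|H_f\|_{\textup{F}}\udef\sup_{\xi}\|H_f(\xi)\|_{\textup{F}}$, tacitly assumed finite; with that convention the stated constant $\beta_f=\|H_f\|_{\textup{F}}$ makes sense.

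First I would dispose of the scalar case $q=1$, where $H_f(\xi)\in\R^{p\times p}$ is the ordinary Hessian matrix. Fixing $\theta,\sigma\in\R^p$, setting $h\udef\sigma-\theta$, and introducing the auxiliary function $\phi(s)\udef f(\theta+sh)$ for $s\in[0,1]$, twice differentiability of $f$ gives $\phi\in\mathcal{C}^2$ with $\phi'(s)=\nabla f(\theta+sh)\,h$ and $\phi''(s)=h^\top H_f(\theta+sh)\,h$. The integral form of Taylor's theorem then produces the key identity
\[
f(\sigma)-f(\theta)-\nabla f(\theta)(\sigma-\theta)=\phi(1)-\phi(0)-\phi'(0)=\int_0^1(1-s)\,h^\top H_f(\theta+sh)\,h\,\de s.
\]

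Next I would estimate the remainder. For each $s$, Cauchy--Schwarz together with $\|H_f(\theta+sh)\|_2\le\|H_f(\theta+sh)\|_{\textup{F}}\le\|H_f\|_{\textup{F}}$ yields $\bigl|h^\top H_f(\theta+sh)\,h\bigr|\le\|H_f\|_{\textup{F}}\,\|h\|^2$, and since $\int_0^1(1-s)\,\de s=\tfrac12$ this gives the bound $\tfrac{1}{2}\|H_f\|_{\textup{F}}\,\|\sigma-\theta\|^2$. Because this constant is symmetric in the two points and $\|\sigma-\theta\|=\|\theta-\sigma\|$, replacing $h$ by $\theta-\sigma$ shows the same estimate controls the quantity $f(\theta)-f(\sigma)-\nabla f(\theta)(\theta-\sigma)$ that actually appears in Definition~\ref{def:smooth}, establishing $\beta_f=\|H_f\|_{\textup{F}}$. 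For the vector-valued case $q>1$ I would apply the scalar estimate componentwise to each $f_k$ and recombine: writing $\|H_f\|_{\textup{F}}^2\udef\sum_{k=1}^q\|H_{f_k}\|_{\textup{F}}^2$ for the Frobenius norm of the Hessian tensor, summing the squared componentwise bounds and taking square roots recovers the claim with the same constant.

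I expect the only genuinely delicate point to be bookkeeping rather than analysis, namely making precise that $\|H_f\|_{\textup{F}}$ is a supremal bound along the segment joining $\theta$ and $\sigma$, and handling the harmless sign discrepancy between the Taylor direction $\sigma-\theta$ and the direction $\theta-\sigma$ used in Definition~\ref{def:smooth}. The supporting inequality $\|A\|_2\le\|A\|_{\textup{F}}$ is standard, so no real obstacle remains once the Taylor identity is in place.
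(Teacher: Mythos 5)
Your proof is correct, and it takes a genuinely different --- and in fact tighter --- route than the paper's. The paper argues via two successive mean-value steps: it first writes $f(\theta)-f(\sigma)=\nabla f(\xi)(\theta-\sigma)$ for some $\xi$ on the segment, then expands $\nabla f(\xi)-\nabla f(\theta)$ through the Hessian at a further intermediate point $\overline{\xi}$. Your argument instead reduces to one dimension via $\phi(s)\udef f(\theta+sh)$ and invokes the Taylor remainder, so the factor $\tfrac12$ emerges directly from $\int_0^1(1-s)\,\de s=\tfrac12$ and no equality-form mean value theorem is ever needed. This buys real rigor: the equality $f(\theta)-f(\sigma)=\nabla f(\xi)(\theta-\sigma)$ used in the paper fails for vector-valued $f$ when $q>1$, and the paper's intermediate identity $\nabla f(\xi)-\nabla f(\theta)=\frac12(\xi-\theta)^\top H_f(\overline{\xi})(\xi-\theta)$ equates a gradient increment with a scalar quadratic form and, followed literally, produces a cubic rather than quadratic bound in $\|\theta-\sigma\|$ (with the correct linear estimate $\|\nabla f(\xi)-\nabla f(\theta)\|\leq\|H_f\|_\textup{F}\|\xi-\theta\|$ the paper's chain would only give the constant $2\|H_f\|_\textup{F}$, not $\|H_f\|_\textup{F}$). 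Your componentwise handling of $q>1$ and your explicit reading of $\|H_f\|_\textup{F}$ as a supremum along the segment address precisely the points the paper glosses over. One small caveat in your write-up: the integral form of the Taylor remainder requires $\phi''$ to be (locally) integrable, which bare twice differentiability does not guarantee; under the lemma's literal hypothesis you should fall back on the Lagrange form $\phi(1)-\phi(0)-\phi'(0)=\tfrac12\phi''(c)$ for some $c\in(0,1)$, which yields the identical bound with no further assumptions.
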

\begin{proof}
Letting $\theta,\sigma\in\R^p$, there exists $\xi\in\R^p$ in the segment $\theta,\sigma$ such that
\[
f(\theta)-f(\sigma)=\nabla f(\xi)(\theta-\sigma).
\]
Thus, by Cauchy-Schwarz inequality,
\[
\|f(\theta)-f(\sigma)-\nabla f(\theta)(\theta-\sigma)\|\leq\|\nabla f(\xi)-\nabla f(\theta)\|\|\theta-\sigma\|.
\]
Hence, for some $\overline{\xi}\in\R^p$ in the segment $\theta,\xi$ we have
\[
\nabla f(\xi)-\nabla f(\theta)=\frac12(\xi-\theta)^\top H_{f}(\overline{\xi})(\xi-\theta),
\]
from which
\[
\|\nabla f(\xi)-\nabla f(\theta)\|\leq\frac12\|H_{f}\|_\textup{F}\|\xi-\theta\|^2\leq\frac12\|H_{f}\|_\textup{F}\|\theta-\sigma\|^2.
\]
Therefore
\[
\|f(\theta)-f(\sigma)-\nabla f(\theta)(\theta-\sigma)\|\leq\frac12\|H_{f}\|_\textup{F}\|\theta-\sigma\|^2,
\]
which proves the claim.
\end{proof}
\begin{defn}[Local $\mu$-Polyak-\L{}ojasiewicz condition \cite{LiuEtAl2022}]\label{def:PL}
A nonnegative function $f:\R^p\to\R$ satisfies the $\muPL$ condition on a set $S\subseteq\R^p$ for $\mu>0$ if, for all $\theta\in S$,
\begin{equation}\label{eq:muPL}
\|\nabla f(\theta)\|^2\geq\mu f(\theta).
\end{equation}
\end{defn}

In order to carry our analysis, it is convenient to split the loss function into the \emph{empirical risk}
\begin{equation}\label{eq:empiricalRisk}
\mathcal{R}_s(\theta)\udef\frac{1}{2}\sum_{i=1}^{N}|\Phi(x_i,t_i;\theta)-u_i|^2,
\end{equation}
and the differential residual loss
\begin{equation}\label{eq:differentialLoss}
\mathcal{R}_d(\theta)\udef\frac{1}{2}\sum_{i=1}^{N}|\D(\Phi(x_i,t_i;\theta))|^2,
\end{equation}
so that
\begin{equation}\label{eq:loss}
\mathcal{L}(\theta)=\mathcal{R}_s(\theta)+\mathcal{R}_d(\theta).
\end{equation}

The empirical risk $\mathcal{R}_s$ measures the squared Euclidean norm of the difference between the network prediction $\Phi(x_i,t_i;\theta)$ and synthetic solution $u_i$ over the training mesh. Minimizing this term ensures that the neural network output is close to the given data; moreover, we are enforcing here initial and boundary conditions in the so-called soft way, with the same weight as the one used for the empirical risk over the training mesh. However, this alone does not enforce any physical laws or differential constraints, which is where the differential residual loss $\mathcal{R}_d$ comes into play. It is the squared Euclidean norm of the differential operator applied on the training mesh, where all the derivatives are computed using automatic differentiation. By minimizing this term, the neural network is expected to produce outputs that satisfy the physical law $\mathcal{D}(\Phi(x,t;\theta))=0$.

We are interested in studying the convergence behavior of the horizon $\delta(\tau)$ to $\delta^*$ in \eqref{eq:sgd}. As it will turn out, for a bond-based peridynamic model \eqref{eq:periPDE} convergence occurs under mild assumptions on the differential residual $\D(\Phi(x,t;\theta))$, and it is, further, one-sided. \\

We first focus on the empirical risk $\mathcal{R}_s(\theta)$, whose convergence analysis is standard (see \cite{LiuEtAl2022}).
\begin{prop}\label{prop:lossFunction}
Let us consider the neural network $\Phi(\lambda)$ as given by \eqref{eq:NN}, with a random parameter setting $\theta_0$ such that $\theta_0^{(l)}\sim\mathcal{N}(0,I_{N_l\times N_{l-1}})$ for $l\in[L]$. Let, for $i\in[N]$,
\[
l_i(\theta)\udef\frac12|\Phi(x_i,t_i;\theta)-u_i|^2,
\]
which is twice differentiable, let $H_{l_i}\in\R^{(P(N)+1)\times(P(N)+1)}$ be the Hessian of $l_i$ and let us set
\[
\beta_s\udef\max_{i\in[N]}\|H_{l_i}\|_\textup{F}.
\]
Let the width $m$ of $\Phi(\theta)$ be such that
\[
m=\tilde\Omega\left(\frac{NR_s^{6L+2}}{(\lambda_s-\mu)^2}\right),
\]
where $\lambda_s\udef\lambda_\textup{min}(K(\theta_0))>0$, $K(\theta)\udef\nabla_\theta\Phi(\theta)\nabla_\theta\Phi(\theta)^\top\in\R^{N\times N}$ is the tangent kernel of $\Phi$, $\mu\in(0,\lambda_s)$ is given, and $R_s\udef\frac{2N\sqrt{2\beta_s\mathcal{R}_s(\theta_0)}}{\mu\alpha}$, for some $\alpha\in(0,1)$. \\
Then, with probability $1-\alpha$, letting the step size $\eta\leq\frac{\mu}{N^2\beta_s^2}$ in \eqref{eq:sgd}, SGD relative to $\mathcal{R}_s$ converges to a global solution in the ball $B(\theta_0;R_s)$, with an exponential convergence rate:
\[
\mathbb{E}[\mathcal{R}_s(\theta^{(n)})]\leq\left(1-\frac{\mu\eta}{N}\right)^n\mathcal{R}_s(\theta_0).
\]
\end{prop}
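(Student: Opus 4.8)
The plan is to treat this as a standard application of the neural tangent kernel (NTK) machinery together with the Polyak-\L{}ojasiewicz (PL) condition, as in \cite{LiuEtAl2022}. The governing object is the tangent kernel $K(\theta)=\nabla_\theta\Phi(\theta)\nabla_\theta\Phi(\theta)^\top$, which dictates the local geometry of $\mathcal{R}_s$. Writing $\Phi(\theta)=(\Phi(x_i,t_i;\theta))_{i\in[N]}$ and $u=(u_i)_{i\in[N]}$, one has $\nabla_\theta\mathcal{R}_s(\theta)=\nabla_\theta\Phi(\theta)^\top(\Phi(\theta)-u)$, whence
\[
\|\nabla_\theta\mathcal{R}_s(\theta)\|^2=(\Phi(\theta)-u)^\top K(\theta)(\Phi(\theta)-u)\geq \lambda_\textup{min}(K(\theta))\,\|\Phi(\theta)-u\|^2=2\lambda_\textup{min}(K(\theta))\,\mathcal{R}_s(\theta).
\]
Hence the empirical risk satisfies the local $\muPL$ condition of Definition \ref{def:PL} on any region where $\lambda_\textup{min}(K(\theta))\geq\mu/2$.

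The crux is to certify $\lambda_\textup{min}(K(\theta))\geq\mu/2$ throughout the ball $B(\theta_0;R_s)$ with probability at least $1-\alpha$. At initialization the Gaussian setting $\theta_0^{(l)}\sim\mathcal{N}(0,I)$ gives $\lambda_\textup{min}(K(\theta_0))=\lambda_s>\mu$, and over-parameterization renders $K$ nearly constant: a Lipschitz bound on $\theta\mapsto K(\theta)$ whose constant scales like $m^{-1/2}$ (up to factors $R_s^{O(L)}$ and $\sqrt{N}$) forces $\|K(\theta)-K(\theta_0)\|_\textup{F}<\lambda_s-\mu$ on the whole ball as soon as $m=\tilde\Omega\big(NR_s^{6L+2}/(\lambda_s-\mu)^2\big)$. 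This is exactly the role of the stated width requirement, and the failure probability $\alpha$ enters through the concentration of the random initial kernel.

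With the PL condition secured on the ball, I would finish in two coupled steps. A confinement argument shows the SGD trajectory never leaves $B(\theta_0;R_s)$: the radius $R_s$ is calibrated so that the cumulative step lengths, which decay geometrically by the contraction below, sum to less than $R_s$. For the contraction itself, $\beta_s$-smoothness of each $l_i$ (available from Lemma \ref{lem:smooth}, as $\|H_{l_i}\|_\textup{F}\leq\beta_s$) applied to the single-sample update $\theta^{(n+1)}=\theta^{(n)}-\eta\nabla_\theta l_i(\theta^{(n)})$ gives, after averaging over the uniform index $i$ (so that $\mathbb{E}_i[\nabla_\theta l_i]=\tfrac1N\nabla_\theta\mathcal{R}_s$) and bounding $\mathbb{E}_i\|\nabla_\theta l_i\|^2$ by a multiple of $\mathcal{R}_s$,
\[
\mathbb{E}_i[\mathcal{R}_s(\theta^{(n+1)})]\leq\Big(1-\frac{\mu\eta}{N}+O(\beta_s^2\eta^2)\Big)\mathcal{R}_s(\theta^{(n)}).
\]
The step-size restriction $\eta\leq\mu/(N^2\beta_s^2)$ absorbs the quadratic term into the contraction factor $1-\mu\eta/N$; taking total expectation and iterating over $n$ yields the asserted rate.

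The hard part will be the kernel-stability estimate and the confinement argument taken together, since they are mutually dependent: confinement of the iterates relies on the PL contraction, which relies on the spectral lower bound $\lambda_\textup{min}(K(\theta))\geq\mu/2$, which in turn relies on the iterates remaining in $B(\theta_0;R_s)$. These must therefore be established simultaneously by a single induction on $n$, tracking both $\mathcal{R}_s(\theta^{(n)})$ and $\|\theta^{(n)}-\theta_0\|$, exactly as carried out in \cite{LiuEtAl2022}.
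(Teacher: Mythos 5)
Your proposal is correct and follows essentially the same route as the paper: the paper's proof simply invokes Lemma~\ref{lem:smooth} for $\beta_s$-smoothness of the $l_i$, cites \cite[Theorem 4]{LiuEtAl2022} to get the $\muPL$ condition on $B(\theta_0;R_s)$ from the width hypothesis, and concludes via \cite[Theorem 7]{LiuEtAl2022}. What you have written is precisely an unpacking of those two cited theorems (tangent-kernel lower bound, kernel stability under over-parameterization, and the coupled confinement--contraction induction), so there is nothing substantively different to reconcile.
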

\begin{proof}
From Lemma \ref{lem:smooth}, $l_i$ is $\beta_s$-smooth for each $i\in[N]$ since they are twice differentiable. Moreover, because of the hypothesis on the width $m$, $\mathcal{R}_s$ satisfies the $\muPL$ condition in $B(\theta_0;R_s)$ (see \cite[Theorem 4]{LiuEtAl2022}). Therefore, from \cite[Theorem 7]{LiuEtAl2022}, the claim follows.
\end{proof}

Next, we prove that also the differential residual $\mathcal{R}_d$ converges to zero, with high probability, over the training phase.
\begin{prop}\label{prop:diffRes}
Let, for $i\in[N]$,
\[
d_i(\theta)\udef\frac12|\D(\Phi(x_i,t_i,\theta))|^2,
\]
which is twice differentiable, let $H_{d_i}\in\R^{(P(N)+1)\times(P(N)+1)}$ be the Hessian of $d_i$ and let us set
\[
\beta_d\udef\max_{i\in[N]}\|H_{d_i}\|_\textup{F}.
\]
Moreover, let $R_d\udef\frac{2N\sqrt{2\beta_d\mathcal{R}_d(\theta_0)}}{\mu\alpha}$, for some $\alpha\in(0,1)$, where $\mu\in(0,\lambda_d)$ is given, being $\lambda_d\udef\lambda_\textup{min}\left(\D(\nabla_\theta\Phi(\theta_0))\D(\nabla_\theta\Phi(\theta_0))^\top\right)$. For all $\theta\in B(\theta_0;R_d)$, let us assume the following:
\begin{align}
&\D\left(\parder{\Phi}{\hat\theta}\right)\in\R^{N\times N}\textrm{ is full rank}, \label{eq:ass:fullRank} \\
&\D\left(\parder{\Phi}{\delta}\right)^\top\Phi\leq\frac12\|\Phi\|^2. \label{eq:ass:peri}
\end{align}
Then, with probability $1-\alpha$, letting the step size $\eta\leq\frac{\mu}{N^2\beta_d^2}$ in \eqref{eq:sgd}, SGD relative to $\mathcal{R}_d$ converges to a global solution in the ball $B(\theta_0;R_d)$, with an exponential convergence rate:
\[
\mathbb{E}[\mathcal{R}_d(\theta^{(n)})]\leq\left(1-\frac{\mu\eta}{N}\right)^n\mathcal{R}_d(\theta_0).
\]
\end{prop}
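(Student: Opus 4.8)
The plan is to follow the same three-step template used for Proposition~\ref{prop:lossFunction}, replacing the empirical-risk tangent kernel $K(\theta)$ by the tangent kernel of the differential residual and replacing the width hypothesis by the two structural assumptions \eqref{eq:ass:fullRank}--\eqref{eq:ass:peri}. First, since each $d_i$ is twice differentiable, Lemma~\ref{lem:smooth} immediately gives that $d_i$ is $\|H_{d_i}\|_\textup{F}$-smooth, hence $\beta_d$-smooth with $\beta_d=\max_{i\in[N]}\|H_{d_i}\|_\textup{F}$. The remaining and essential step is to verify that $\mathcal{R}_d$ satisfies the $\muPL$ condition of Definition~\ref{def:PL} on $B(\theta_0;R_d)$; once this is in place, \cite[Theorem~7]{LiuEtAl2022} yields the claimed exponential decay $\mathbb{E}[\mathcal{R}_d(\theta^{(n)})]\leq(1-\mu\eta/N)^n\mathcal{R}_d(\theta_0)$ for the stated step size $\eta\leq\mu/(N^2\beta_d^2)$.

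To establish the $\muPL$ condition, I would write $\mathcal{D}(\Phi)$ for the vector $(\D(\Phi(x_i,t_i;\theta)))_{i=1}^N\in\R^N$, so that $\mathcal{R}_d(\theta)=\tfrac12\|\mathcal{D}(\Phi)\|^2$ and, by the chain rule,
\[
\nabla_\theta\mathcal{R}_d(\theta)=\left[\nabla_\theta\mathcal{D}(\Phi)\right]^\top\mathcal{D}(\Phi),\qquad \|\nabla_\theta\mathcal{R}_d(\theta)\|^2=\mathcal{D}(\Phi)^\top K_d(\theta)\,\mathcal{D}(\Phi),
\]
where $K_d(\theta)\udef\nabla_\theta\mathcal{D}(\Phi)\,\nabla_\theta\mathcal{D}(\Phi)^\top\in\R^{N\times N}$ is the residual tangent kernel, the exact analogue of $K(\theta)$, with $\lambda_d=\lambda_\textup{min}(K_d(\theta_0))$. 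Splitting $\theta=(\hat\theta,\delta)$ gives the block decomposition
\[
K_d(\theta)=\D\!\left(\parder{\Phi}{\hat\theta}\right)\D\!\left(\parder{\Phi}{\hat\theta}\right)^\top+\D\!\left(\parder{\Phi}{\delta}\right)\D\!\left(\parder{\Phi}{\delta}\right)^\top.
\]
The second summand is a rank-one positive-semidefinite matrix, hence $K_d(\theta)\succeq\D(\partial\Phi/\partial\hat\theta)\D(\partial\Phi/\partial\hat\theta)^\top$, and the full-rank assumption \eqref{eq:ass:fullRank} makes the right-hand side positive definite; in particular $\lambda_\textup{min}(K_d(\theta_0))=\lambda_d>\mu$. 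Granting the uniform lower bound $\lambda_\textup{min}(K_d(\theta))\geq\mu$ on the whole ball (the point deferred below), we obtain
\[
\|\nabla_\theta\mathcal{R}_d(\theta)\|^2\geq\mu\|\mathcal{D}(\Phi)\|^2=2\mu\,\mathcal{R}_d(\theta)\geq\mu\,\mathcal{R}_d(\theta),
\]
which is exactly \eqref{eq:muPL}.

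The role of assumption \eqref{eq:ass:peri} is to control the $\delta$-direction, where the dependence of $\D$ on $\delta$ enters not only through $\Phi$ but also through the integration limits $x\pm\delta$, producing Leibniz boundary terms. The bound $\D\left(\parder{\Phi}{\delta}\right)^\top\Phi\leq\tfrac12\|\Phi\|^2$ prevents the $\delta$-sensitivity of the residual from counteracting the descent or from driving the iterates out of $B(\theta_0;R_d)$, so that the SGD trajectory \eqref{eq:sgd} remains in the ball on which the kernel estimate holds. With $\beta_d$-smoothness and the $\muPL$ condition secured on $B(\theta_0;R_d)$, the hypotheses of \cite[Theorem~7]{LiuEtAl2022} are met and the conclusion follows exactly as in Proposition~\ref{prop:lossFunction}.

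I expect the main obstacle to be the uniform transfer of the spectral bound from initialization to the whole ball: the quantity $\lambda_d=\lambda_\textup{min}(K_d(\theta_0))$ is defined only at $\theta_0$, whereas the $\muPL$ condition must hold for every $\theta\in B(\theta_0;R_d)$. This near-constancy of the residual tangent kernel $K_d(\theta)$ is the analogue of the width hypothesis in Proposition~\ref{prop:lossFunction} and must be obtained from a perturbation estimate of the form $\|K_d(\theta)-K_d(\theta_0)\|\leq\lambda_d-\mu$ valid throughout the ball, controlled in terms of $R_d$ and $\beta_d$. Verifying that \eqref{eq:ass:fullRank}--\eqref{eq:ass:peri} indeed furnish such an estimate for the nonlocal operator $\D$, including its $\delta$-boundary terms, is the delicate point of the argument.
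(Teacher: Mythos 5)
Your overall template (smoothness of the $d_i$ via Lemma \ref{lem:smooth}, a $\muPL$ bound on $B(\theta_0;R_d)$, then \cite[Theorem 7]{LiuEtAl2022}) is the paper's template, but your verification of the $\muPL$ condition has a genuine gap. The identity
\[
\|\nabla_\theta\mathcal{R}_d(\theta)\|^2=\D(\Phi)^\top K_d(\theta)\,\D(\Phi),\qquad K_d=\D\left(\parder{\Phi}{\hat\theta}\right)\D\left(\parder{\Phi}{\hat\theta}\right)^\top+\D\left(\parder{\Phi}{\delta}\right)\D\left(\parder{\Phi}{\delta}\right)^\top,
\]
is false for this operator, precisely because the integration limits in \eqref{eq:PINN_diffRes} are $x\pm\delta$. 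The Leibniz computation, carried out explicitly in the paper, gives
\[
\parder{}{\delta}\,\D(\Phi)=\D\left(\parder{\Phi}{\delta}\right)-\Phi,
\]
so the $\delta$-column of the residual Jacobian is $\D\left(\parder{\Phi}{\delta}\right)-\Phi$, not $\D\left(\parder{\Phi}{\delta}\right)$, and the correct expansion is
\[
\|\nabla_\theta\mathcal{R}_d(\theta)\|^2=\D(\Phi)^\top\left(\D(\nabla_\theta\Phi)\D(\nabla_\theta\Phi)^\top+\mathcal{A}\right)\D(\Phi),\quad \mathcal{A}=\hat{\mathcal{A}}+\hat{\mathcal{A}}^\top,\quad \hat{\mathcal{A}}=\Phi\left(\frac12\Phi-\D\left(\parder{\Phi}{\delta}\right)\right)^\top.
\]
The entire role of assumption \eqref{eq:ass:peri} in the paper is to make the unique nonzero eigenvalue $\left(\frac12\Phi-\D\left(\parder{\Phi}{\delta}\right)\right)^\top\Phi$ of the rank-one matrix $\hat{\mathcal{A}}$ nonnegative, so that the correction quadratic form $\D(\Phi)^\top\mathcal{A}\D(\Phi)$ is nonnegative and the PL bound $\|\nabla_\theta\mathcal{R}_d\|^2\geq 2\lambda_\textup{min}\left(\D(\nabla_\theta\Phi)\D(\nabla_\theta\Phi)^\top\right)\mathcal{R}_d(\theta)\geq 2\mu\,\mathcal{R}_d(\theta)$ survives. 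You do acknowledge that Leibniz boundary terms exist, but you then assign \eqref{eq:ass:peri} a vague stabilizing role (\emph{preventing the $\delta$-sensitivity from counteracting the descent or driving iterates out of the ball}) that corresponds to no step of the argument; as written, your computation never actually uses \eqref{eq:ass:peri}, and dropping the $-\Phi$ term discards a cross term of a priori indefinite sign, so the $\muPL$ inequality is not established. (Keeping the iterates in the ball is, moreover, part of the conclusion of \cite[Theorem 7]{LiuEtAl2022} once $\muPL$ holds on $B(\theta_0;R_d)$, not something \eqref{eq:ass:peri} is needed for.)

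On what you call the main obstacle: the paper performs no kernel-perturbation estimate of the form $\|K_d(\theta)-K_d(\theta_0)\|\leq\lambda_d-\mu$. It simply assumes \eqref{eq:ass:fullRank}--\eqref{eq:ass:peri} hold at every $\theta\in B(\theta_0;R_d)$ and takes $\mu\in(0,\lambda_d)$ as given, using the spectral lower bound $\mu$ throughout the ball. You are right that the passage from pointwise positive definiteness to the uniform bound $\geq\mu$ is terse in the paper and would merit the quantitative control you sketch; but that is a hypothesis-level refinement, whereas the indispensable missing idea in your write-up is the explicit differentiation of the moving endpoints $x\pm\delta$ and the resulting rank-one correction matrix, which is exactly where \eqref{eq:ass:peri} enters.
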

\begin{proof}
Let $\theta\in B(\theta_0;R_d)$ be given. From Lemma \ref{lem:smooth}, the functions $d_i$ are $\beta_d$-smooth for each $i\in[N]$ since they are twice differentiable. \\
Let us now observe that the matrix $\D(\nabla_\theta\Phi)$ can be partitioned as
\[
\D(\nabla_\theta\Phi)=\bmat{\D\left(\parder{\Phi}{\hat\theta}\right) & \D\left(\parder{\Phi}{\delta}\right)},
\]
so that
\[
\D(\nabla_\theta\Phi)\D(\nabla_\theta\Phi)^\top=\D\left(\parder{\Phi}{\hat\theta}\right)\D\left(\parder{\Phi}{\hat\theta}\right)^\top+\D\left(\parder{\Phi}{\delta}\right)\D\left(\parder{\Phi}{\delta}\right)^\top.
\]
Since $\D\left(\parder{\Phi}{\hat\theta}\right)$ is full rank, $\D(\nabla_\theta\Phi)\D(\nabla_\theta\Phi)^\top$ is positive definite. Therefore
\[
\lambda_\textup{min}(\D(\nabla_\theta\Phi)\D(\nabla_\theta\Phi)^\top)>0.
\]
Let us now compute $\parder{\mathcal{R}_d}{\delta}(\theta)$. Letting
\[
\varphi_\Phi(y)\udef C(x-y)(\Phi(x,t)-\Phi(y,t)),\quad y\in(x-\delta,x+\delta),
\]
for any $\delta\neq\delta^*$, $\delta>0$, we have
\begin{align*}
\parder{}{\delta}\left(\int_{x-\delta}^{x+\delta}\varphi_\Phi(y)\de y\right) &= \parder{}{\delta}\left(\int_{x-\delta}^{x+\delta}C(x-y)\Phi(x)\de y-(C*\Phi(\cdot,t))(x)\right) \\
&= \parder{}{\delta}\left(\Phi(x)\int_{x-\delta}^{x+\delta}C(x-y)\de y-(C*\Phi(\cdot,t))(x)\right) \\
&= \parder{}{\delta}\left(\delta\Phi(x)-(C*\Phi)(x)\right) \\
&= \Phi(x,t)+\delta\parder{\Phi}{\delta}(x)-(C*\Phi(\cdot,t))(x) \\
&= \Phi(x)+\int_{x-\delta}^{x+\delta}C(x-y)\left(\parder{\Phi}{\delta}(x,t)-\parder{\Phi}{\delta}(y,t)\right)\de y,
\end{align*}
where the convolution product $(C*\Phi(\cdot,t))(x)$ is supported over $[x-\delta,x+\delta]$. Thus, from \eqref{eq:PINN_diffRes} it follows that
\begin{align*}
\parder{\mathcal{R}_d}{\delta}(\theta) &= \left\langle\parder{\D}{\delta}(\Phi),\D(\Phi;\delta)\right\rangle \\
&= \left\langle\parder{}{\delta}\parder{^2\Phi}{t^2}-\parder{}{\delta}\left(\int_{x-\delta}^{x+\delta}\parder{\varphi_\Phi}{\delta}(y)\de y\right),\D(\Phi)\right\rangle \\
&= \left\langle\parder{}{\delta}\parder{^2\Phi}{t^2}-\Phi-\int_{x-\delta}^{x+\delta}C(x-y)\left(\parder{\Phi}{\delta}(x,t)-\parder{\Phi}{\delta}(y,t)\right)\de y,\D(\Phi)\right\rangle \\
&= \left\langle\D\left(\parder{\Phi}{\delta}\right)-\Phi,\D(\Phi)\right\rangle.
\end{align*}
Therefore, letting $\Phi_i(\theta)\udef\Phi(x_i,t_i;\theta)$ for $i\in[N]$, we have that
\begin{align*}
\frac12\|\nabla_\theta\mathcal{R}_d(\theta)\|^2 &= \frac{1}{2}\left(\sum_{j=1}^{N(P)}\left(\sum_{i=1}^N\D(\Phi_i)\D\left(\parder{\Phi_i}{\theta_j}\right)\right)^2+
\left(\sum_{i=1}^N\D(\Phi_i)\left(\D\left(\parder{\Phi_i}{\delta}\right)-\Phi_i\right)\right)^2\right) \\
&= \frac{1}{2}\left(\D(\Phi)^\top\D(\nabla_\theta\Phi)D(\nabla_\theta\Phi)^\top\D(\Phi)+\D(\Phi)^\top\mathcal{A}\D(\Phi)\right),
\end{align*}
where
\[
\mathcal{A}\udef\hat{\mathcal{A}}+\hat{\mathcal{A}}^\top,\quad\hat{\mathcal{A}}\udef\Phi\left(\frac12\Phi-\D\left(\parder{\Phi}{\delta}\right)\right)^\top.
\]
Now, $\hat{\mathcal{A}}$ is a rank 1 matrix, whose unique nonzero eigenvalue is equal to $\left(\frac12\Phi-\D\left(\parder{\Phi}{\delta}\right)\right)^\top\Phi$, that is nonnegative because of \eqref{eq:ass:peri}. Therefore $\hat{\mathcal{A}}$ is nonnegative definite, and so is $\mathcal{A}$, which is further symmetric. This implies that $\D(\Phi)^\top\mathcal{A}\D(\Phi)\geq0$, and hence
\begin{align*}
\frac12\|\nabla_\theta\mathcal{R}_d(\theta)\|^2 &\geq \frac{1}{2}\D(\Phi)^\top\D(\nabla_\theta\Phi)D(\nabla_\theta\Phi)^\top\D(\Phi) \\
&\geq \lambda_\textup{min}(\D(\nabla_\theta\Phi)D(\nabla_\theta\Phi)^\top)\frac{1}{2}\|\D(\Phi)\|^2 \\
&\geq \mu\mathcal{R}_d(\theta),
\end{align*}
saying that $\mathcal{R}_d$ satisfies the $\muPL$ condition in $B(\theta_0;R_d)$. Therefore, again from \cite[Theorem 7]{LiuEtAl2022}, the claim follows.
\end{proof}

Let us now observe that, under the hypothesis of Proposition \ref{prop:lossFunction} and Proposition \ref{prop:diffRes}, it is reasonable to expect that the realization $\Phi(\theta)$ would be more and more insensitive to the parameter $\delta$ as $\theta$ approaches the global minimum in some suitably small neighborhood of $\theta_0$. Therefore, we will assume that
\begin{equation}\label{eq:dPhidDeltaTo0}
\lim_{n\to\infty}\parder{\Phi(\theta^{(n)})}{\delta}=0,
\end{equation}
where $\theta^{(n)}$ is evolving according to the SGD method \eqref{eq:sgd}.
\begin{lem}\label{lem:dPhidDeltaTo0}
Let $\Phi$ be given as in \eqref{eq:NN}. Then
\[
\lim_{n\to\infty}\D\left(\parder{\Phi(\theta^{(n)})}{\delta}\right)=0.
\]
\end{lem}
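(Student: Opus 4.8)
The plan is to exploit the fact that, for a fixed value of the parameter $\delta$, the differential residual \eqref{eq:PINN_diffRes} acts \emph{linearly} on its functional argument, and then to pass to the limit term by term. Setting $g_n\udef\parder{\Phi(\theta^{(n)})}{\delta}$, I would write
\[
\D(g_n)=\parder{^2 g_n}{t^2}-\int_{x-\delta}^{x+\delta}C(x-y)\left(g_n(x,t)-g_n(y,t)\right)\de y,
\]
and show that both summands vanish as $n\to\infty$. It is worth noting in passing that the relation $\D\!\left(\parder{\Phi}{\delta}\right)=\parder{}{\delta}\D(\Phi)+\Phi$ derived inside the proof of Proposition \ref{prop:diffRes} is \emph{not} convenient here, since it introduces the non-vanishing term $\Phi$; the direct, linearity-based splitting above is cleaner.

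First I would dispatch the nonlocal term. Since $C$ is nonnegative, even, compactly supported on $[-\delta,\delta]$ and belongs to $L^1(\R)$ (see Theorem \ref{th:wellposedness}), one has on the training domain
\[
\left|\int_{x-\delta}^{x+\delta}C(x-y)\left(g_n(x,t)-g_n(y,t)\right)\de y\right|\leq 2\,\|C\|_{L^1}\,\sup_{(x,t)}|g_n(x,t)|.
\]
By the standing hypothesis \eqref{eq:dPhidDeltaTo0} the right-hand side tends to $0$, so this contribution disappears in the limit.

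The main obstacle is the local term $\parder{^2 g_n}{t^2}=\parder{^2}{t^2}\parder{\Phi(\theta^{(n)})}{\delta}$. Because $\Phi$ is the explicit composition \eqref{eq:NN} with a smooth activation $\rho$, it is jointly smooth in $(x,t,\theta)$, the mixed partials commute, and $\parder{^2 g_n}{t^2}=\parder{}{\delta}\parder{^2\Phi(\theta^{(n)})}{t^2}$. The delicate point is that pointwise vanishing of $g_n$ does not by itself control its time-derivatives, so I would upgrade the mode of convergence: since the SGD iterates remain in the bounded ball $B(\theta_0;R_d)$ of Proposition \ref{prop:diffRes} and the mesh lies in a compact set, the family $\{g_n\}$ together with its $(x,t)$-derivatives up to second order is uniformly bounded and equicontinuous, so an Arzel\`a--Ascoli argument promotes $g_n\to0$ to convergence in $\mathcal{C}^2$ with respect to $(x,t)$, whence $\parder{^2 g_n}{t^2}\to0$. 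Equivalently, reading \eqref{eq:dPhidDeltaTo0} as convergence in the $\mathcal{C}^2_b$ norm, I would simply invoke that $\D$ is a bounded linear operator from $\mathcal{C}^2_b$ into $\mathcal{C}^0_b$ and pass the limit through, obtaining $\D(g_n)\to\D(0)=0$.

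Combining the two parts gives $\lim_{n\to\infty}\D(g_n)=0$, as claimed. I expect the genuine difficulty to lie entirely in justifying the interchange of the limit with the second time-derivative; the nonlocal term and the linearity of $\D$ are routine once the boundedness of $C$ is invoked.
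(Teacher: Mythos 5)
Your proof is correct and takes essentially the paper's route: the paper dispatches this lemma in a single line by invoking the smoothness of $\Phi$ (through the smooth activation) and the structure of the operator $\D$, which is precisely what your argument makes explicit. Your linear splitting of $\D$, the $L^1$ bound $2\|C\|_{L^1}\sup|g_n|$ on the nonlocal term, and the Arzel\`a--Ascoli justification for interchanging the limit with $\parder{^2}{t^2}$ (using that the iterates stay in the bounded ball $B(\theta_0;R_d)$ and the mesh domain is compact) simply supply the details the paper leaves implicit, and your observation that the identity $\D\left(\parder{\Phi}{\delta}\right)=\parder{}{\delta}\D(\Phi)+\Phi$ from Proposition \ref{prop:diffRes} would be the wrong tool here is a correct and useful sanity check.
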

\begin{proof}
The claim follows from the smoothness of $\Phi$, since the activation function is smooth, and the nature of the differential operator $\D$.
\end{proof}

Now, let $\{\theta_s^{(n)}\}_{n\in\N},\{\theta_d^{(n)}\}_{n\in\N}$ be the two sequences arising from Proposition \ref{prop:lossFunction} and Proposition \ref{prop:diffRes}, relative to $\mathcal{R}_s,\mathcal{R}_d$ and convergent to $\theta_s^*,\theta_d^*$ respectively, within the ball $B(\theta_0;R)$, where $R\udef\min\{R_s,R_d\}$. Let us further assume that such global minima are unique in $B(\theta_0;R)$.

It is straightforward that, if $\theta_s^*=\theta_d^*=\theta^*$, then such a common value $\theta^*$ is a minimum point for $\mathcal{L}(\theta)$. \\ However, this is typically not the case and, in order to broach the optimization problem \eqref{eq:minLoss}, we propose to consider the following multi-objective problem:
\begin{equation}\label{eq:mop}
\min_{\theta\in\R^{P(N)+1}}\mathcal{L}_m(\theta)=\bmat{\mathcal{R}_s(\theta) \\ \mathcal{R}_d(\theta)}.
\end{equation}
This way, as a consequence of \eqref{eq:loss}, problem \eqref{eq:minLoss} can be seen as a linear scalarization version (we refer to \cite{EmmerichDeutz2018} for a comprehensive review on the topic) of \eqref{eq:mop} with uniform weights. \\
Before carrying out our analysis, let us recall  some definitions.
\begin{defn}\label{def:paretoDom}
Let $x,y\in\R^p$. We say that $x$ Pareto-dominates $y$ and we write $x\prec y$ if and only if $x_i\leq y_i$ for all $i\in[p]$ and $x_i<y_i$ for at least one $i\in[p]$.
\end{defn}
\begin{defn}\label{def:paretoEff}
Let $f:\R^p\to\R^q$ and let us consider the multi-objective problem $\min_{x\in\R^p}f(x)$. We say that a solution $x\in\R^p$ is Pareto optimal if and only if there does not exist $y\in\R^p$ such that $f(y)\prec f(x)$.
\end{defn}
It holds the following.
\begin{prop}
The global minimum solutions $\theta_s^*,\theta_d^*$ are Pareto optimal for $\mathcal{L}_m(\theta)$ in $B(\theta_0;R)$.
\end{prop}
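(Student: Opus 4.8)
The plan is to argue by contradiction, exploiting that both objectives are nonnegative sums of squares whose global minimum values vanish. First I would record the decisive consequence of Propositions~\ref{prop:lossFunction} and~\ref{prop:diffRes}. Since $\mathcal{R}_s$ and $\mathcal{R}_d$ are sums of squares they are nonnegative, and at the global solutions $\theta_s^*,\theta_d^*$ the gradients vanish; combined with the $\muPL$ inequality $\|\nabla f\|^2\geq\mu f$ this forces $f\leq 0$, hence $\mathcal{R}_s(\theta_s^*)=0$ and $\mathcal{R}_d(\theta_d^*)=0$ (equivalently, the rate $(1-\mu\eta/N)^n\to 0$ drives the limit value to the floor $0$). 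Thus $\mathcal{L}_m(\theta_s^*)=\bmat{0 \\ \mathcal{R}_d(\theta_s^*)}$ and $\mathcal{L}_m(\theta_d^*)=\bmat{\mathcal{R}_s(\theta_d^*) \\ 0}$, i.e.\ each candidate already sits at the global floor in one of the two coordinates.

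To show that $\theta_s^*$ is Pareto optimal, I would suppose for contradiction that some $\sigma\in B(\theta_0;R)$ satisfies $\mathcal{L}_m(\sigma)\prec\mathcal{L}_m(\theta_s^*)$. By Definition~\ref{def:paretoDom} this requires $\mathcal{R}_s(\sigma)\leq\mathcal{R}_s(\theta_s^*)=0$, and since $\mathcal{R}_s\geq 0$ we get $\mathcal{R}_s(\sigma)=0$; so the first component cannot be strictly decreased, and the mandatory strict inequality must fall in the second component, $\mathcal{R}_d(\sigma)<\mathcal{R}_d(\theta_s^*)$. But $\mathcal{R}_s(\sigma)=0=\mathcal{R}_s(\theta_s^*)$ together with the uniqueness of the global minimizer of $\mathcal{R}_s$ in $B(\theta_0;R)$ forces $\sigma=\theta_s^*$, whence $\mathcal{R}_d(\sigma)=\mathcal{R}_d(\theta_s^*)$, contradicting the strict inequality. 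The argument for $\theta_d^*$ is entirely symmetric: swap the roles of $\mathcal{R}_s$ and $\mathcal{R}_d$ and invoke the uniqueness of the $\mathcal{R}_d$-minimizer in $B(\theta_0;R)$.

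The proof is short precisely because the vanishing of the two minimum values does all the work: once a component rests at the global floor $0$ it admits no strict improvement, so any putative dominator must match that coordinate exactly, and uniqueness then pins it to the minimizer itself. The only step that genuinely needs care, and the natural place such an argument could fail, is the uniqueness hypothesis on the two global minima within $B(\theta_0;R)$: without it one could envisage a distinct $\sigma$ with $\mathcal{R}_s(\sigma)=0$ yet strictly smaller $\mathcal{R}_d$, which would Pareto-dominate $\theta_s^*$. I would therefore make explicit that this uniqueness, assumed immediately before the statement, is exactly what excludes that scenario and closes the argument.
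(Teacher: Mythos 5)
Your proof is correct and follows essentially the same route as the paper: assume a Pareto-dominating point, use the uniqueness of the global minimizer of $\mathcal{R}_s$ (resp.\ $\mathcal{R}_d$) in the ball to force the dominator to coincide with $\theta_s^*$ (resp.\ $\theta_d^*$), and derive a contradiction with the mandatory strict inequality. Your preliminary detour through the vanishing of the minimum values ($\mathcal{R}_s(\theta_s^*)=0$, $\mathcal{R}_d(\theta_d^*)=0$) is superfluous --- the paper's argument needs only $\mathcal{R}_s(\sigma)\leq\mathcal{R}_s(\theta_s^*)$ plus uniqueness, and your gradient-vanishing justification would anyway require the minimizer to be interior to the ball --- but it does no harm to the logic.
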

\begin{proof}
Let $\overline\theta\in\R^{P(N)+1}$ and let us assume that $\mathcal{L}_m(\overline{\theta})\prec\mathcal{L}_m(\theta_s^*)$. Therefore $\mathcal{R}_s(\overline{\theta})\leq\mathcal{R}_s(\theta_s^*)$, $\mathcal{R}_d(\overline{\theta})\leq\mathcal{R}_d(\theta_s^*)$ and at least one of them holds strictly. Since $\theta_s^*$ is the unique global minimum for $\mathcal{R}_s$ in $B(\theta_0;R)$, then $\overline{\theta}=\theta_s^*$ and  $\mathcal{R}_d(\overline{\theta})<\mathcal{R}_d(\theta_s^*)$, which is a contradiction. Therefore $\theta_s^*$ is Pareto optimal and so is, with analogous computations, $\theta_d^*$.
\end{proof}
We want to prove now that the SGD \eqref{eq:sgd} relative to the linear scalarization problem \eqref{eq:minLoss} indeed converges to a global minimum in a suitable neighborhood of $\theta_0$. Since, if it exists, this minimum point will be Pareto optimal (see \cite[Proposition 8]{EmmerichDeutz2018}), then we have to expect that $\nabla\mathcal{R}_s,\nabla\mathcal{R}_d$ would compete around the minimum. In fact, we are going to prove that, if such a competition between gradients is bounded from below, then $\mathcal{L}$ satisfies a $\muPL$, and hence the convergence will follow.
\begin{thm}\label{thm:L_globalMinConvergence}
Let all the assumptions of Proposition \ref{prop:lossFunction} and Proposition \ref{prop:diffRes} hold. Moreover, let $\beta\udef\min\{\beta_s,\beta_d\}$ and $R_0\udef\min\{R_s,R_d\}$. Let us further assume that
\begin{equation}\label{eq:gradCompetition}
\langle\nabla\mathcal{R}_s(\theta),\nabla\mathcal{R}_d(\theta)\rangle>-\frac{\mu}{2}\mathcal{L}(\theta)
\end{equation}
for all $\theta\in B(\theta_0;R_0)$ and for some $\mu\in(0,\min\{\lambda_s,\lambda_d\})$. \\
Then there exist $\overline\mu>0$ and $R\in(0,R_0)$ such that, for some $\alpha\in(0,1)$, letting the step size $\eta\leq\frac{\overline\mu}{N^2\beta^2}$ in \eqref{eq:sgd}, with probability $1-\alpha$ the SGD relative to $\mathcal{L}$ converges to a global solution in the ball $B(\theta_0;R)$, with an exponential convergence rate:
\[
\mathbb{E}[\mathcal{L}(\theta^{(n)})]\leq\left(1-\frac{\overline\mu\eta}{N}\right)^n\mathcal{L}(\theta_0).
\]
\end{thm}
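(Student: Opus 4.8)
The plan is to mirror the proofs of Proposition \ref{prop:lossFunction} and Proposition \ref{prop:diffRes} line for line: I would verify that the total loss $\mathcal{L}=\mathcal{R}_s+\mathcal{R}_d$ is $\beta$-smooth and that it obeys a local Polyak--\L{}ojasiewicz inequality (Definition \ref{def:PL}) with some constant $\overline\mu>0$ on a ball $B(\theta_0;R)$, and then quote \cite[Theorem 7]{LiuEtAl2022} verbatim to read off the exponential decay with rate $1-\overline\mu\eta/N$ under the step-size restriction $\eta\le\overline\mu/(N^2\beta^2)$. Smoothness is the routine half: the activation $\rho$ is smooth, so each per-sample objective $l_i+d_i$ is twice differentiable and hence $\beta$-smooth by Lemma \ref{lem:smooth}, with $\beta$ as in the statement. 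The entire content of the theorem therefore sits in the PL$^*$ inequality for $\mathcal{L}$.

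For that inequality I would first expand
\[
\|\nabla\mathcal{L}(\theta)\|^2=\|\nabla\mathcal{R}_s(\theta)\|^2+\|\nabla\mathcal{R}_d(\theta)\|^2+2\langle\nabla\mathcal{R}_s(\theta),\nabla\mathcal{R}_d(\theta)\rangle .
\]
On $B(\theta_0;R_0)$, with $R_0=\min\{R_s,R_d\}$, both component PL$^*$ bounds $\|\nabla\mathcal{R}_s\|^2\ge\mu\mathcal{R}_s$ and $\|\nabla\mathcal{R}_d\|^2\ge\mu\mathcal{R}_d$ are available from the two preceding propositions (they share the same $\mu\in(0,\min\{\lambda_s,\lambda_d\})$), so their sum is at least $\mu\mathcal{L}$. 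Abbreviating the competition hypothesis \eqref{eq:gradCompetition} as
\[
q(\theta)\udef\langle\nabla\mathcal{R}_s(\theta),\nabla\mathcal{R}_d(\theta)\rangle+\tfrac{\mu}{2}\mathcal{L}(\theta)>0,
\]
substitution yields the clean chain
\[
\|\nabla\mathcal{L}(\theta)\|^2\ge\mu\mathcal{L}(\theta)+2q(\theta)-\mu\mathcal{L}(\theta)=2q(\theta)>0 .
\]

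The remaining, and genuinely delicate, step is to upgrade this pointwise strict positivity $\|\nabla\mathcal{L}\|^2\ge 2q(\theta)>0$ into a true PL$^*$ bound $\|\nabla\mathcal{L}(\theta)\|^2\ge\overline\mu\,\mathcal{L}(\theta)$ with a \emph{uniform} constant $\overline\mu>0$: the naive combination above exhibits an exact cancellation of the $\mu\mathcal{L}$ terms, so strictness alone does not hand over a positive proportionality constant. My plan is to pass to a slightly smaller closed ball $\overline{B(\theta_0;R)}$ with $R<R_0$, on which $\theta\mapsto q(\theta)$ and $\theta\mapsto\mathcal{L}(\theta)$ are continuous (again by smoothness of $\rho$), and to set
\[
\overline\mu\udef 2\inf_{\theta\in\overline{B(\theta_0;R)},\,\mathcal{L}(\theta)>0}\frac{q(\theta)}{\mathcal{L}(\theta)} ,
\]
the key being that this infimum is strictly positive. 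The hard part is precisely controlling the ratio $q/\mathcal{L}$ as $\theta$ approaches a minimizer, where both numerator and denominator vanish; here one must choose $R$ small enough that the unique component minimizers $\theta_s^*,\theta_d^*$ do not collapse to a common zero of $\mathcal{L}$ inside the ball, so that compactness and continuity force the quotient to stay bounded away from $0$. Once $\overline\mu>0$ is secured, $\mathcal{L}$ satisfies the local Polyak--\L{}ojasiewicz condition on $B(\theta_0;R)$ with constant $\overline\mu$, and \cite[Theorem 7]{LiuEtAl2022} delivers $\mathbb{E}[\mathcal{L}(\theta^{(n)})]\le(1-\overline\mu\eta/N)^n\mathcal{L}(\theta_0)$, completing the argument.
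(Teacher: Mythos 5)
Your proposal is essentially the paper's proof: the paper expands $\|\nabla\mathcal{L}(\theta)\|^2$ in exactly the same way, and your constant $\overline\mu=2\inf q/\mathcal{L}$ is algebraically identical to the paper's choice $\overline{\mu}\udef\mu+2\min_{\theta\in B(\theta_0;R_0)}\langle\nabla\mathcal{R}_s(\theta),\nabla\mathcal{R}_d(\theta)\rangle/\mathcal{L}(\theta)$, after which both arguments conclude by verifying the $\muPL$ condition and invoking \cite[Theorem 7]{LiuEtAl2022} with $R\udef\min\bigl\{R_0,\,2N\sqrt{2\beta\mathcal{L}(\theta_0)}/(\overline\mu\alpha)\bigr\}$. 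The uniform-positivity issue you flag is genuine but is not resolved in the paper either---it simply takes the minimum of the ratio over the ball and asserts $\overline\mu>0$ from the strict pointwise inequality \eqref{eq:gradCompetition}, implicitly assuming attainment away from zeros of $\mathcal{L}$---so your compactness discussion is added caution on a step the paper passes over, not a different method.
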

\begin{proof}
From \eqref{eq:gradCompetition}, defining
\[
\overline{\mu}\udef\mu+2\min_{\theta\in B(\theta_0;R_0)}\frac{\langle\nabla\mathcal{R}_s(\theta),\nabla\mathcal{R}_d(\theta)\rangle}{\mathcal{L}(\theta)},
\]
it follows that $\overline{\mu}>0$. Now, let us set
\[
R\udef\min\left\{R_0,\frac{2N\sqrt{2\beta\mathcal{L}(\theta_0)}}{\overline\mu\alpha}\right\}.
\]
Because of \eqref{eq:loss}, for $\theta\in B(\theta_0;R)$ we have that
\begin{align*}
\|\nabla\mathcal{L}(\theta)\|^2 &=\|\nabla\mathcal{R}_s(\theta)+\nabla\mathcal{R}_d(\theta)\|^2 \\
&= \|\nabla\mathcal{R}_s(\theta)\|^2+\|\nabla\mathcal{R}_d(\theta)\|^2+2\langle\nabla\mathcal{R}_s(\theta),\nabla\mathcal{R}_d(\theta)\rangle \\
&\geq \mu\mathcal{L}(\theta)+2\langle\nabla\mathcal{R}_s(\theta),\nabla\mathcal{R}_d(\theta)\rangle \\
&\geq \overline{\mu}\mathcal{L}(\theta),
\end{align*}
implying that $\mathcal{L}(\theta)$ satisfies the $\muPL$ condition in $B(\theta_0;R)$. Resorting to \cite[Theorem 7]{LiuEtAl2022} proves the claim.
\end{proof}
\begin{cor}\label{cor:paretoOptL}
The global solution to \eqref{eq:minLoss} is Pareto optimal for $\mathcal{L}(\theta)$ on $B(\theta_0;R)$.
\end{cor}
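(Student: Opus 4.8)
The plan is to invoke the standard linear-scalarization principle from multi-objective optimization: a minimizer of a weighted sum of the objectives with strictly positive weights is always Pareto optimal. Since, by \eqref{eq:loss}, $\mathcal{L}=\mathcal{R}_s+\mathcal{R}_d$ is precisely the uniform-weight (hence strictly positive) scalarization of $\mathcal{L}_m$ from \eqref{eq:mop}, and since Theorem \ref{thm:L_globalMinConvergence} guarantees that the SGD iteration \eqref{eq:sgd} converges to a global minimizer $\theta^*$ of $\mathcal{L}$ on the ball $B(\theta_0;R)$, the conclusion should follow by the very same contradiction argument already used to show that $\theta_s^*,\theta_d^*$ are Pareto optimal.

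Concretely, I would argue as follows. Suppose, for contradiction, that the global solution $\theta^*$ to \eqref{eq:minLoss} is not Pareto optimal for $\mathcal{L}_m$ on $B(\theta_0;R)$. Then, by Definition \ref{def:paretoEff}, there exists $\overline{\theta}\in B(\theta_0;R)$ with $\mathcal{L}_m(\overline{\theta})\prec\mathcal{L}_m(\theta^*)$; unfolding Definition \ref{def:paretoDom}, this means $\mathcal{R}_s(\overline{\theta})\leq\mathcal{R}_s(\theta^*)$ and $\mathcal{R}_d(\overline{\theta})\leq\mathcal{R}_d(\theta^*)$, with at least one of the two inequalities strict. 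Summing the two components and using \eqref{eq:loss} yields $\mathcal{L}(\overline{\theta})<\mathcal{L}(\theta^*)$, which contradicts the fact, supplied by Theorem \ref{thm:L_globalMinConvergence}, that $\theta^*$ is the global minimizer of $\mathcal{L}$ on $B(\theta_0;R)$. Hence no such dominating point exists, and $\theta^*$ is Pareto optimal.

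The only point requiring care — and the single place where the statement could fail without it — is the domain on which Pareto dominance is tested: the contradiction only bites if the hypothetical dominating point $\overline{\theta}$ is constrained to lie in $B(\theta_0;R)$, since the minimality granted by Theorem \ref{thm:L_globalMinConvergence} is local to that ball. I would therefore phrase Pareto optimality relative to $B(\theta_0;R)$, exactly as the corollary does, so that the localization is consistent throughout. Since this is precisely the form of \cite[Proposition 8]{EmmerichDeutz2018} already cited in the discussion preceding the theorem, the corollary is essentially an instantiation of that result for the uniform-weight scalarization, and I do not anticipate any genuine obstacle beyond keeping the bookkeeping of the ball $B(\theta_0;R)$ consistent.
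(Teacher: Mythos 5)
Your proposal is correct and follows essentially the same route as the paper, which simply invokes the linear-scalarization principle (\cite[Proposition 8]{EmmerichDeutz2018}) applied to the uniform-weight scalarization $\mathcal{L}=\mathcal{R}_s+\mathcal{R}_d$ of $\mathcal{L}_m$, combined with the global minimality on $B(\theta_0;R)$ supplied by Theorem \ref{thm:L_globalMinConvergence}. Your explicit contradiction argument merely writes out the standard one-line proof of that cited proposition, and your care in restricting the dominating point to $B(\theta_0;R)$ matches the paper's localization.
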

\begin{proof}
Since \eqref{eq:loss} is a linear scalarization of $\mathcal{L}_m(\theta)$, then from \cite[Proposition 8]{EmmerichDeutz2018} we deduce that the global solution of Theorem \ref{thm:L_globalMinConvergence} is Pareto optimal for $\mathcal{L}(\theta)$ on $B(\theta_0;R)$.
\end{proof}
We can now prove the following result about one-sided convergence of $\{\delta^{(n)}\}_{n\in\N}$ to $\delta^*$.
\begin{thm}\label{thm:estimate}
Let all the assumptions of Theorem \ref{thm:L_globalMinConvergence} hold, let $\alpha\in(0,1)$, and let $\varepsilon>0$ be given. Then, there exists $\nu>0$ such that, for all $n>\nu$:
\begin{itemize}
\item
if $\mathbb{E}[\Phiin\D(\Phiin)]>2\varepsilon^\frac32$, then with probability $1-\alpha$: $\mathbb{E}[\delta^{(n+1)}]>\mathbb{E}[\delta^{(n)}]$;
\item
if $\mathbb{E}[\Phiin\D(\Phiin)]<-2\varepsilon^\frac32$, then with probability $1-\alpha$: $\mathbb{E}[\delta^{(n+1)}]<\mathbb{E}[\delta^{(n)}]$.
\end{itemize}
\end{thm}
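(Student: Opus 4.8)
The plan is to track the evolution of the $\delta$-component (the $(P(N)+1)$-st coordinate) of the SGD iteration \eqref{eq:sgd}, and to show that, once the network has become sufficiently insensitive to $\delta$, the sign of the expected increment $\mathbb{E}[\delta^{(n+1)}]-\mathbb{E}[\delta^{(n)}]$ is dictated solely by the sign of $\mathbb{E}[\Phiin\D(\Phiin)]$.

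First I would isolate the $\delta$-component of the stochastic gradient. Using the identity $\parder{\D(\Phi)}{\delta}=\D\left(\parder{\Phi}{\delta}\right)-\Phi$ already established in the proof of Proposition \ref{prop:diffRes}, the single-sample horizon update (all quantities evaluated at $\theta^{(n)}$, with index $i$ sampled uniformly from $[N]$) reads
\[
\delta^{(n+1)}=\delta^{(n)}-\eta\left((\Phi_i-u_i)\parder{\Phi_i}{\delta}+\D(\Phi_i)\D\left(\parder{\Phi_i}{\delta}\right)-\Phi_i\D(\Phi_i)\right).
\]
Taking the conditional expectation over $i$ (independent of $\theta^{(n)}$) and then the full expectation gives
\[
\mathbb{E}[\delta^{(n+1)}]-\mathbb{E}[\delta^{(n)}]=\eta\,\mathbb{E}[\Phiin\D(\Phiin)]-\eta\,E_1^{(n)}-\eta\,E_2^{(n)},
\]
where $E_1^{(n)}\udef\mathbb{E}[(\Phi_i-u_i)\parder{\Phi_i}{\delta}]$ and $E_2^{(n)}\udef\mathbb{E}[\D(\Phi_i)\D(\parder{\Phi_i}{\delta})]$ are treated as error terms.

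The crux is to show that $E_1^{(n)}$ and $E_2^{(n)}$ are each $O(\varepsilon^{3/2})$ for $n$ large, so that they cannot overturn the sign of the leading term once $|\mathbb{E}[\Phiin\D(\Phiin)]|>2\varepsilon^{3/2}$. For this I would apply Cauchy--Schwarz over the uniform sampling of $i$,
\[
|E_1^{(n)}|\leq\sqrt{\mathbb{E}[(\Phi_i-u_i)^2]}\,\sqrt{\mathbb{E}\!\left[\left(\parder{\Phi_i}{\delta}\right)^2\right]},\qquad |E_2^{(n)}|\leq\sqrt{\mathbb{E}[\D(\Phi_i)^2]}\,\sqrt{\mathbb{E}\!\left[\D\!\left(\parder{\Phi_i}{\delta}\right)^2\right]},
\]
and observe that $\mathbb{E}[(\Phi_i-u_i)^2]=\tfrac{2}{N}\mathbb{E}[\mathcal{R}_s(\theta^{(n)})]$ and $\mathbb{E}[\D(\Phi_i)^2]=\tfrac{2}{N}\mathbb{E}[\mathcal{R}_d(\theta^{(n)})]$ decay exponentially by Proposition \ref{prop:lossFunction} and Proposition \ref{prop:diffRes}, while the sensitivity factors $\parder{\Phi_i}{\delta}$ and $\D(\parder{\Phi_i}{\delta})$ vanish by the standing assumption \eqref{eq:dPhidDeltaTo0} and by Lemma \ref{lem:dPhidDeltaTo0}. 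Thus I can choose $\nu$ so large that, for every $n>\nu$, simultaneously $\mathbb{E}[\mathcal{R}_s(\theta^{(n)})],\mathbb{E}[\mathcal{R}_d(\theta^{(n)})]<\varepsilon$ and the two sensitivity norms are $<\varepsilon$; the Cauchy--Schwarz bounds then yield $|E_1^{(n)}|,|E_2^{(n)}|\leq\frac{\sqrt2}{N}\varepsilon^{3/2}\leq\varepsilon^{3/2}$, whence $|E_1^{(n)}+E_2^{(n)}|\leq2\varepsilon^{3/2}$.

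Finally I would combine the pieces. For $n>\nu$, if $\mathbb{E}[\Phiin\D(\Phiin)]>2\varepsilon^{3/2}$ then $\mathbb{E}[\Phiin\D(\Phiin)]-(E_1^{(n)}+E_2^{(n)})\geq\mathbb{E}[\Phiin\D(\Phiin)]-2\varepsilon^{3/2}>0$, so (since $\eta>0$) $\mathbb{E}[\delta^{(n+1)}]>\mathbb{E}[\delta^{(n)}]$; the symmetric estimate with the reversed inequality gives $\mathbb{E}[\delta^{(n+1)}]<\mathbb{E}[\delta^{(n)}]$ when $\mathbb{E}[\Phiin\D(\Phiin)]<-2\varepsilon^{3/2}$. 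The probability $1-\alpha$ is inherited from Proposition \ref{prop:lossFunction} and Proposition \ref{prop:diffRes}, on whose high-probability events the exponential decay of $\mathcal{R}_s,\mathcal{R}_d$ holds. The main obstacle I anticipate is the careful bookkeeping of the error terms: justifying the $\varepsilon^{3/2}$ threshold, which arises precisely from pairing a $\sqrt{\varepsilon}$-small residual (from the loss decay) with an $\varepsilon$-small $\delta$-sensitivity (from \eqref{eq:dPhidDeltaTo0} and Lemma \ref{lem:dPhidDeltaTo0}), and reconciling the deterministic-limit form of assumption \eqref{eq:dPhidDeltaTo0} with the in-expectation convergence of the residuals.
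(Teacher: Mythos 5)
Your proposal is correct and takes essentially the same route as the paper's proof: isolate the $\delta$-component of \eqref{eq:sgd} via the identity $\parder{\D(\Phi)}{\delta}=\D\left(\parder{\Phi}{\delta}\right)-\Phi$ from Proposition \ref{prop:diffRes}, take expectations, and bound the two cross terms by $\varepsilon^{3/2}$ each by pairing the $\sqrt{\varepsilon}$-small residuals with the $\varepsilon$-small $\delta$-sensitivities from \eqref{eq:dPhidDeltaTo0} and Lemma \ref{lem:dPhidDeltaTo0}, yielding the same two-sided estimate $\eta\left(\mathbb{E}[\Phiin\D(\Phiin)]\mp2\varepsilon^{\frac32}\right)$ on the expected increment. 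The only (harmless) deviations are cosmetic: you use Cauchy--Schwarz over the sampling of $i$ where the paper combines Jensen's inequality with the pointwise sensitivity bounds, and the exponential decay of $\mathbb{E}[\mathcal{R}_s(\theta^{(n)})]$ and $\mathbb{E}[\mathcal{R}_d(\theta^{(n)})]$ along the trajectory of SGD applied to $\mathcal{L}$ should be credited to Theorem \ref{thm:L_globalMinConvergence} (via $\mathcal{R}_s,\mathcal{R}_d\leq\mathcal{L}$), as the two propositions concern SGD applied to each residual separately.
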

\begin{proof}
Looking at the $(n+1)$st component of \eqref{eq:sgd} and performing analogous computations as in the proof of Proposition \ref{prop:diffRes}, we obtain that
\begin{align*}
\delta^{(n+1)} &= \delta^{(n)}-\frac{\eta}{2}\left(\parder{}{\delta}|\Phiin-u_i|^2+\parder{}{\delta}|\D(\Phiin)|^2\right) \\
&= \delta^{(n)}-\frac{\eta}{2}\left((\Phi_i^{(n)}-u_i)\parder{\Phiin}{\delta}+\D(\Phiin)\left(\D\left(\parder{\Phiin}{\delta}\right)-\Phiin\right)\right).
\end{align*}
From Theorem \ref{thm:L_globalMinConvergence} there exists $\nu>0$ such that, for all $n>\nu$, and using Jensen's inequality,
\[
\mathbb{E}[|\Phiin-u_i|]^2\leq\mathbb{E}[|\Phiin-u_i|^2]\leq\mathbb{E}[\mathcal{R}_s(\theta^{(n)})]\leq\mathbb{E}[\mathcal{L}(\theta^{(n)})]\leq\varepsilon,
\]
and
\[
\mathbb{E}[|\D(\Phiin)|]^2\leq\mathbb{E}[\D(\Phiin)^2]\leq\mathbb{E}[\mathcal{R}_d(\theta^{(n)})]\leq\mathbb{E}[\mathcal{L}(\theta^{(n)})]\leq\varepsilon.
\]
Therefore
\begin{align*}
\mathbb{E}[|\Phiin-u_i|] &\leq \varepsilon^\frac12, \\
\mathbb{E}[|\D(\Phiin)|] &\leq \varepsilon^\frac12.
\end{align*}
Also, from \eqref{eq:dPhidDeltaTo0} and Lemma \ref{lem:dPhidDeltaTo0}, we have
\begin{align*}
\left|\parder{\Phiin}{\delta}\right| &\leq \varepsilon, \\
\left|\D\left(\parder{\Phiin}{\delta}\right)\right| &\leq \varepsilon.
\end{align*}
Hence, it follows that
\begin{align*}
\mathbb{E}[\delta^{(n+1)}-\delta^{(n)}] &= \eta\Bigg(\mathbb{E}[\Phiin\D(\Phiin)] \\
&\quad -(\Phiin-u_i)\parder{\Phiin}{\delta}-\D(\Phiin)\D\left(\parder{\Phiin}{\delta}\right)\Bigg),
\end{align*}
and thus
\[
\eta\left(\mathbb{E}[\Phiin\D(\Phiin)]-2\varepsilon^\frac32\right)\leq\mathbb{E}[\delta^{(n+1)}-\delta^{(n)}]\leq\eta\left(\mathbb{E}[\Phiin\D(\Phiin)]+2\varepsilon^\frac32\right),
\]
from which the claim follows.
\end{proof}

\begin{rem}
Theorem \ref{thm:estimate} says that the convergence of $\{\delta^{(n)}\}$ to the global minimum, whose existence is guaranteed by Proposition \ref{prop:lossFunction} and Proposition \ref{prop:diffRes}, under the condition in \eqref{eq:dPhidDeltaTo0}, must be monotonic. Such a behavior has been observed and reported in Section \ref{sec:results}. However, it seems that, in the 1D case, the convergence is monotonically decreasing, while it is monotonically increasing in the 2D case. We are not able to say anything about that this would always be the case or even why, and it will be further investigated.
\end{rem}

\begin{rem}\label{rem:norm}
If we replace the means in the loss function \eqref{eq:PINN_lossFunction} with norms and define
\begin{equation}\label{eq:PINN_lossFunction_norm}
\mathcal{L}_2(\Phi,\delta)\udef\sqrt{\sum_{i=1}^{N_x}\sum_{j=1}^{N_t}|\Phi(x_i^*,t_j^*)-\theta_{ij}^*|^2}+\sqrt{\sum_{i=1}^{N_x}\sum_{j=1}^{N_t}|\D(\Phi(x_i^*,t_j^*);\delta^*)|^2},
\end{equation}
then the analysis above should be slightly modified, as we would have denominators when taking derivatives with respect to the parameters, that would go to zero as $\delta$ approaches $\delta^*$. The analysis, in this case, seems more elusive, as reported in Section \ref{sec:results}. In fact, we can notice that the minimization process suffers from stagnation at some unreliably high level either for the data loss, or for the residual loss, or for both. We surmise that, in this case, the convergence in the minimization process of $\mathcal{L}_2$ could tend towards a Pareto optimal solution, which is not a global minimum, still one-sided as in the case of relative to $\mathcal{L}$ as loss function.
\end{rem}

\begin{rem}
If $\delta^*$ is not known a priori and one has no hint on how to select a suitable initial guess, there is no guarantee that $\delta$ would converge towards the true value $\delta^*$. In fact, Proposition \ref{prop:lossFunction}, Proposition \ref{prop:diffRes} and Theorem \ref{thm:estimate} provide local results, and the attractivity region depends on quantities that are often hard to compute, so that convergence could get stuck at some local minimum. This is an interesting and deep aspect worth to be further investigated.
\end{rem}

\section{Numerical Experiments}\label{sec:results}

In this section we present several experiments to show how PINNs behave in the context of inverse problems in bond-based peridynamic models relative to the learning process of the horizon parameter. It is interesting to notice that, with standard tuning of loss functions, learning rate and PINN architecture, such problems are relatively well-conditioned in some suitable convergence region. More specifically, we are going to see that such regions are usually one-sided, possibly suggesting that the true sought values are unstable equilibrium points for the PINN model gradient flow. \\
The PINN architecture used in the next examples has a representation with 8 hidden layers, each made up by 20 neurons; the activation function is $\tanh$, and a \texttt{glorot\_normal} kernel initializer acts on each layer (including the output layer); moreover, we implemented ADAM optimizer for our experiments.

The machine used for the experiments is an Intel Core i7-8850H CPU at 2.60GHz and 64 GB of RAM; the code has been written in Python 3.10, using TensorFlow 2.15.0 within the Keras 3.0.1. \\

In the next examples we show how convergence is attained, when solving \eqref{eq:minLoss}, for different kernel shapes, only if the training process is started from a superestimate of $\delta^*$. Moreover, we report the convergence issues reported in Remark \ref{rem:norm} relative to the $\mathcal{L}_2$.

\begin{exm}\label{ex:data2}
In Section~\ref{sec:convergence} we proved that the horizon learning process is one-sided convergent, in the sense that, for one-dimensional problems, the method can attain the horizon size value only if we start the process with an initial value greater than the expected one. This example aims to provide a numerical confirmation of the theoretical result presented in the previous section. \\
Let us consider a kernel function of type \eqref{eq:VKernel}, whose expression is given by
\[
C(\xi)=
\begin{cases}
    \frac35|\xi|,\quad&|\xi|\geq\delta^*,\\
    0,\quad&|\xi|<\delta^*,
\end{cases}
\]
with $\delta^*=10$. Letting
\[
c(\xi)\udef\frac35|\xi|,
\]
we notice that we can globally rewrite $C(\xi)$, for every $\xi\in\R$, as
\[
C(\xi)=c_\textup{min}(\xi)+c(\delta)\sgn(c_\textup{min}(\xi)),\quad c_\textup{min}(\xi)\udef\min\{c(\xi)-c(\delta),0\}.
\]

\begin{figure}[htb]
\centering
\includegraphics[width=0.32\textwidth]{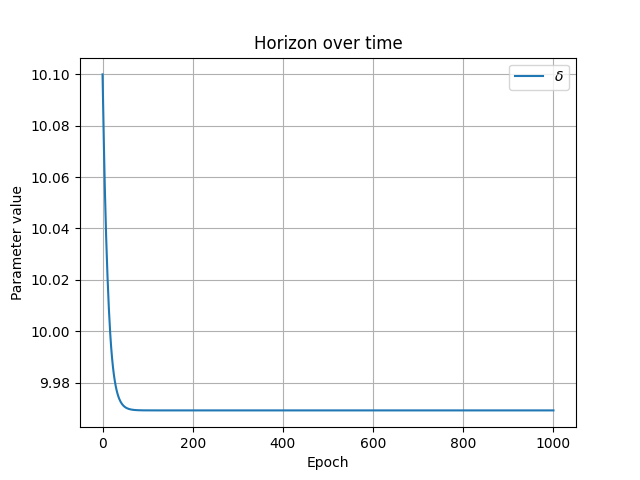}
\includegraphics[width=0.32\textwidth]{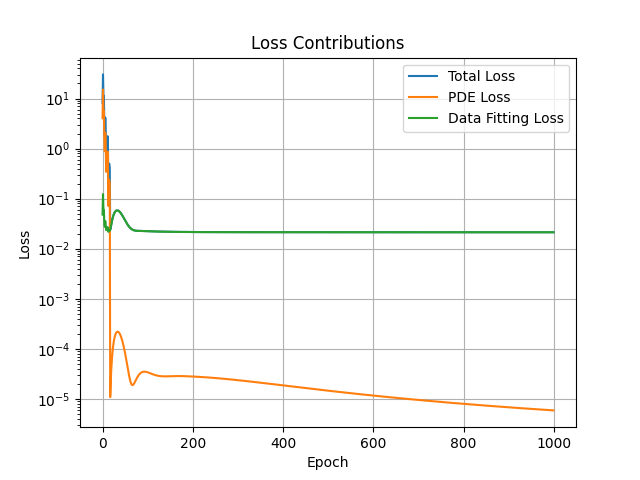}
\includegraphics[width=0.32\textwidth]{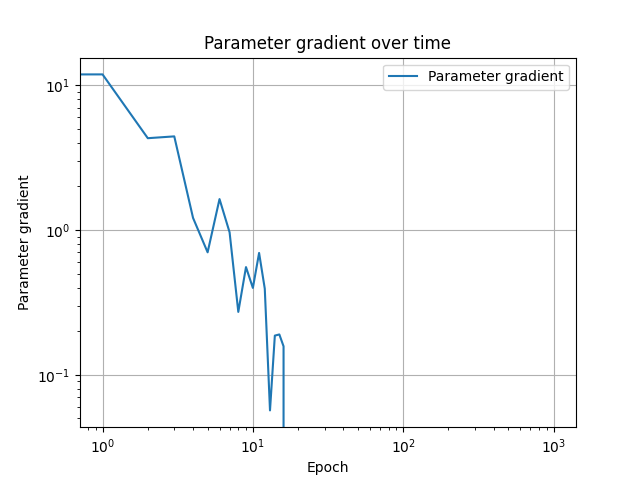}
\caption{Parameter learning, loss and gradient evolution for Example \ref{ex:data2} starting at $\delta=10.1$; the last graph is in logarithmic scale. The true value for the parameter is $\delta^*=10$. The loss function is the mean squared empirical risk $\mathcal{L}$ in \eqref{eq:PINN_lossFunction} with constant learning rate.}
\label{fig:data2inval1_1_meanSquared}
\end{figure}
\begin{figure}[htb]
\centering
\includegraphics[width=0.32\textwidth]{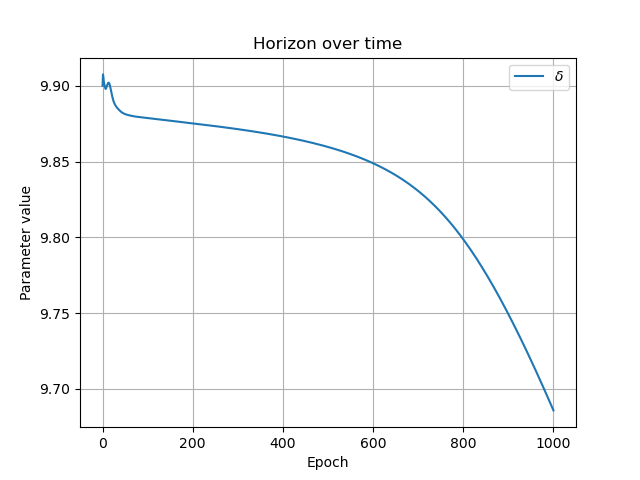}
\includegraphics[width=0.32\textwidth]{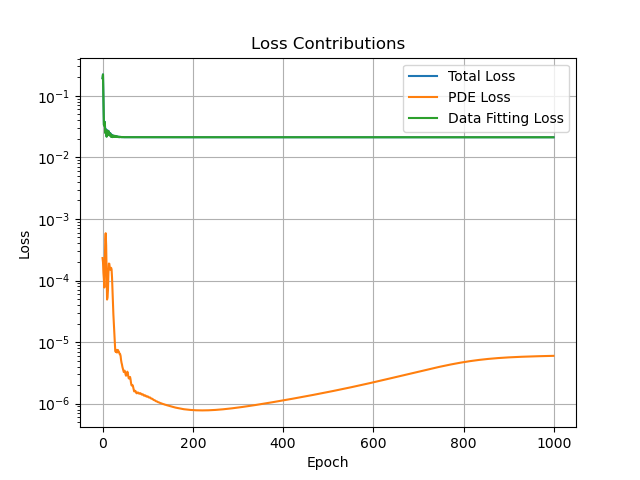}
\includegraphics[width=0.32\textwidth]{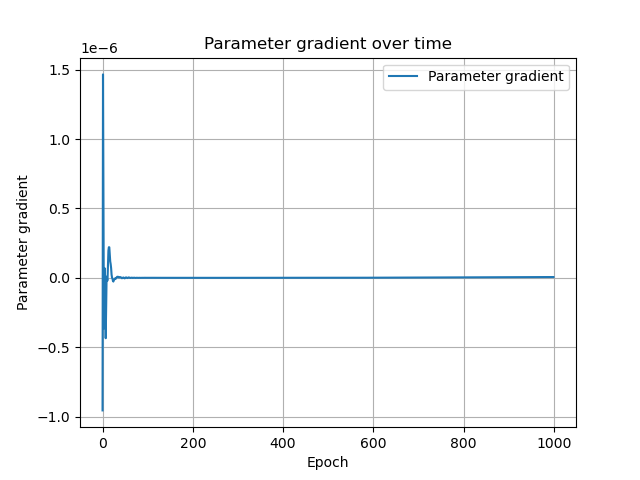}
\caption{Parameter learning, loss and gradient evolution for Example \ref{ex:data2} starting at $\delta=9.9$. The true value for the parameter is $\delta^*=10$. The loss function is the mean squared empirical risk $\mathcal{L}$ in \eqref{eq:PINN_lossFunction} with constant learning rate.}
\label{fig:data2inval9_9_meanSquared}
\end{figure}

We perform two simulations with the same setting, but changing the initial guess. In the first case, we start the process by an initial condition greater than $\delta^\ast$ and we observe that the process converges to $\delta^\ast$. While, for initial values belonging to a left neighborhood of $\delta^\ast$ the convergence of the process is not guaranteed.
Figure \ref{fig:data2inval1_1_meanSquared} is obtained with an initial guess $\delta=10.1$; as it can be seen from the leftmost graph, the gradient stays positive and goes to zero, providing convergence. \\
We also performed an analogous simulation with a starting value $\delta=9.9<\delta^*$. In this case, as shown in Figure \ref{fig:data2inval9_9_meanSquared}, there is no evidence of convergence to some stable value in 1000 epochs. In the rightmost graph, the gradient evolution stays positive after a transient of sign changing.  \\

Moreover, we report experimental results about convergence issues when the loss function is $\mathcal{L}_2$ as in \eqref{eq:PINN_lossFunction_norm}. In Figure \ref{fig:data2inval11_norm} we chose an initial superestimate $\delta=11$ for $\delta^*$; as it can be seen from the rightmost graph, the gradient stays positive and goes to zero, providing convergence for the residual loss, while the empirical risk seems to be not minimized at $\delta^*$, suggesting the process may have reached a Pareto optimal solution that is not a global minimum. \\
We also performed an analogous simulation with a starting value $\delta=9.8<\delta^*$. In this case, as shown in Figure \ref{fig:data2inval9.8}, there is no evidence of convergence to some stable value in 1000 epochs; again, the residual loss seems to stagnate, suggesting that some other equilibrium could exist, different from $\delta^*$ and isolated. \\

For all the simulations relative to this example, the learning rate has been kept constant to $1e-2$ over the 1000 epochs of the training process.

\begin{figure}[htb]
\centering
\includegraphics[width=0.32\textwidth]{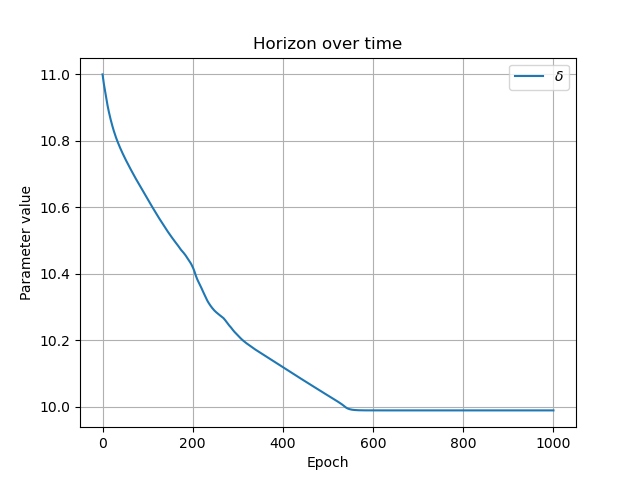}
\includegraphics[width=0.32\textwidth]{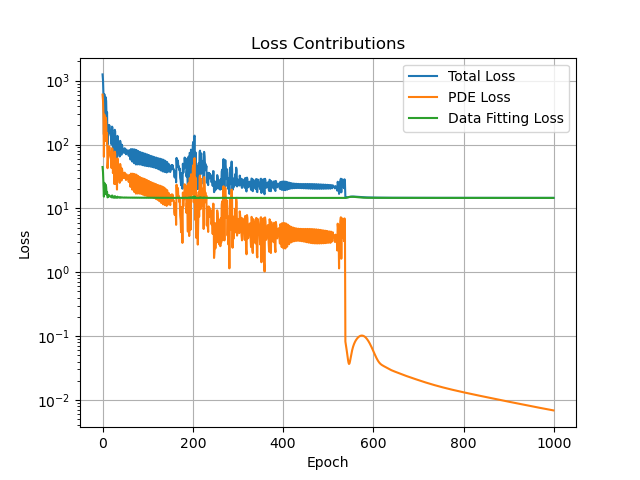}
\includegraphics[width=0.32\textwidth]{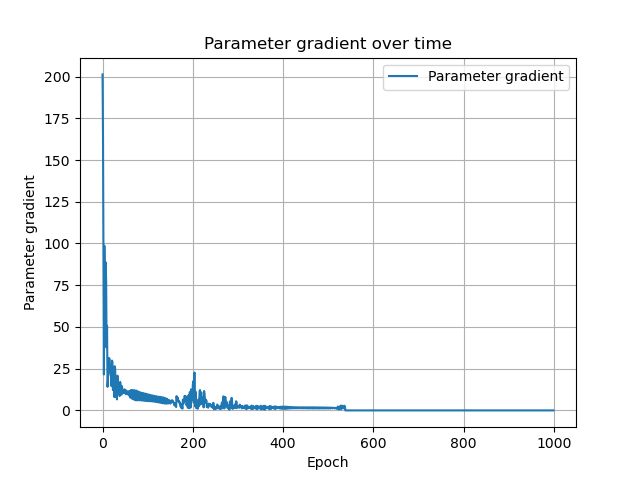}
\caption{Parameter learning evolution, loss and gradient for Example \ref{ex:data2} starting at $\delta=11$. The true value for the parameter is $\delta^*=10$. The loss function is the euclidean norm empirical risk $\mathcal{L}_2$ in \eqref{eq:PINN_lossFunction_norm} with constant learning rate.}
\label{fig:data2inval11_norm}
\end{figure}

\begin{figure}[htb]
\centering
\includegraphics[width=0.32\textwidth]{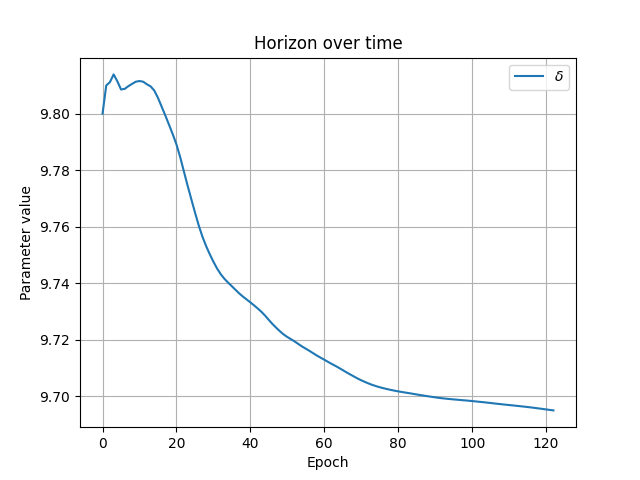}
\includegraphics[width=0.32\textwidth]{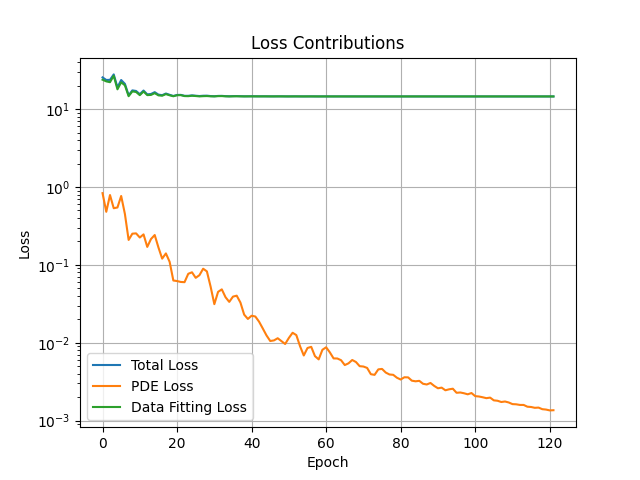}
\includegraphics[width=0.32\textwidth]{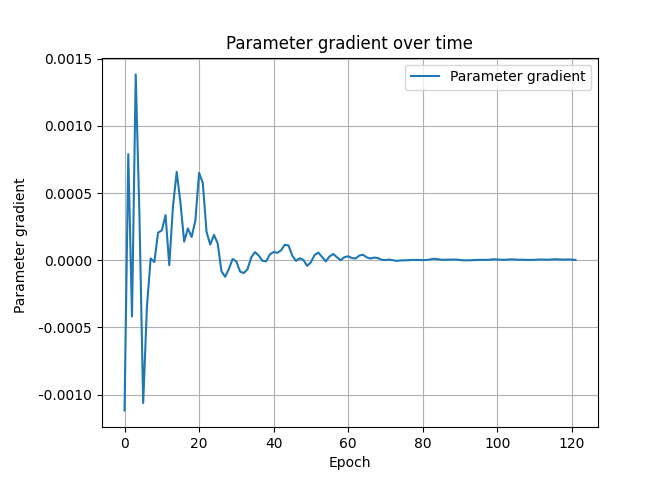}
\caption{Parameter learning evolution for Example \ref{ex:data2} starting at $\delta=9.8<\delta^*=10$. The loss function is the euclidean norm empirical risk $\mathcal{L}_2$ in \eqref{eq:PINN_lossFunction_norm} with constant learning rate.}
\label{fig:data2inval9.8}
\end{figure}

\end{exm}
\begin{exm}\label{ex:data3}
In this example, a kernel function of type \eqref{eq:UKernel} is considered, with expression
\[
C(\xi)=
\begin{cases}
	\frac{|\xi|-10+\delta^*}{\delta^*},\quad&|\xi|\geq10-\delta^*,\\
    0,\quad&|\xi|<10-\delta^*,
\end{cases}
\]
with $\delta^*=1$. Letting
\begin{align*}
c(\xi) &\udef \left|\frac{\xi}{\delta}\right|+\frac{\delta-10}{\delta}, \\
c_0(\xi) &\udef \max\{c(\xi),0\},
\end{align*}
analogously as in Example \ref{ex:data2}, we can rewrite $C(\xi)$, for every $\xi\in\R$, as
\[
C(\xi)=c_\textup{min}(\xi)+c_0(\delta)\sgn(c_\textup{min}(\xi)),\quad c_\textup{min}(\xi)\udef\min\{c_0(\xi)-c_0(\delta),0\}.
\]

In Figure \ref{fig:data3_inVal1_5_meanSquared} we show convergence of the horizon towards a good approximation of the true value starting from $\delta=1.5$. We selected a constant learning rate set to $1e-2$ over the 1000 epochs of the training process.

\begin{figure}[htb]
\centering
\includegraphics[width=0.32\textwidth]{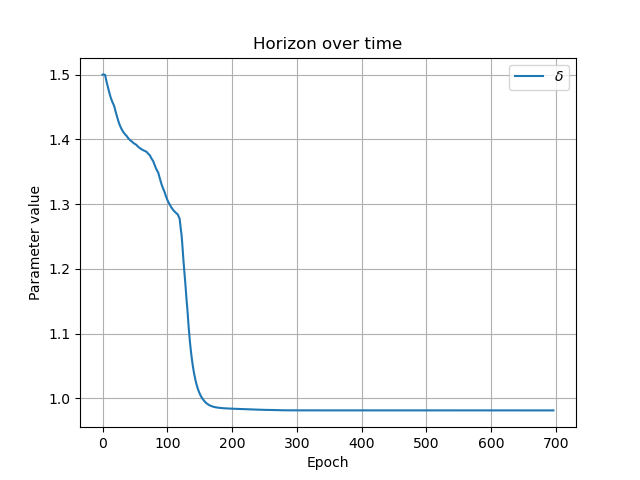}
\includegraphics[width=0.32\textwidth]{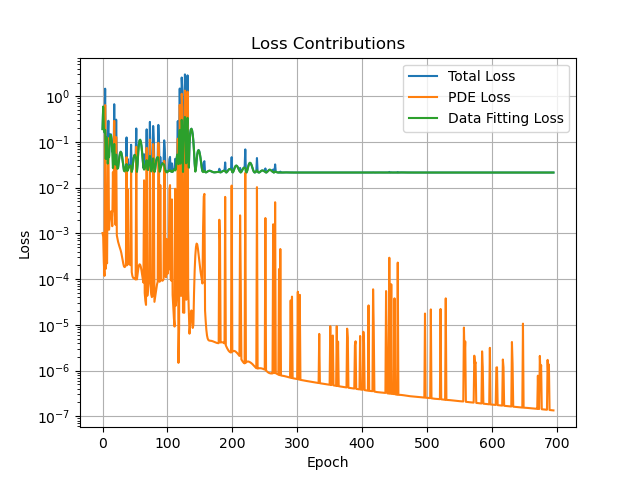}
\includegraphics[width=0.32\textwidth]{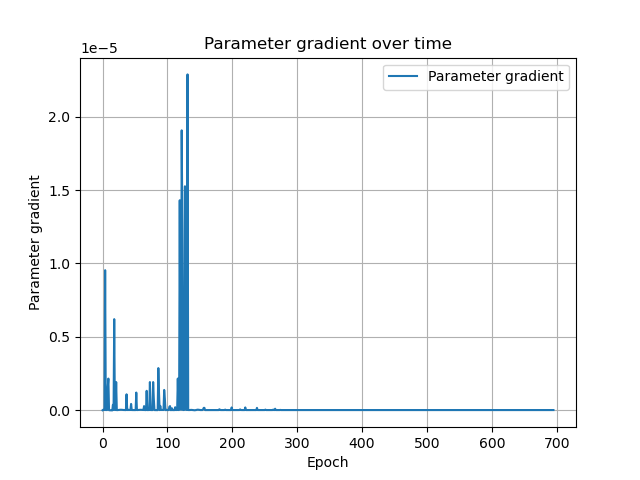}
\caption{Parameter learning, loss and gradient evolution for Example \ref{ex:data3} starting at $\delta=1.5$. The true value for the parameter is $\delta^*=1$. The loss function is $\mathcal{L}$ and the the learning rate is constant, set at $1e-2$.}
\label{fig:data3_inVal1_5_meanSquared}
\end{figure}

For this case, we also experimented on different learning rate. More specifically, when using a cyclical \texttt{PolynomialDecay} scheduler of degree 3, with an initial value of $1e-2$ decaying to a final value of $1e-4$ every 100 epochs, over a total number of 1000 epochs, we obtain the results shown in Figure \ref{fig:data3_inVal1_5_meanSquared_polyDecay}. \\

\begin{figure}[htb]
\centering
\includegraphics[width=0.32\textwidth]{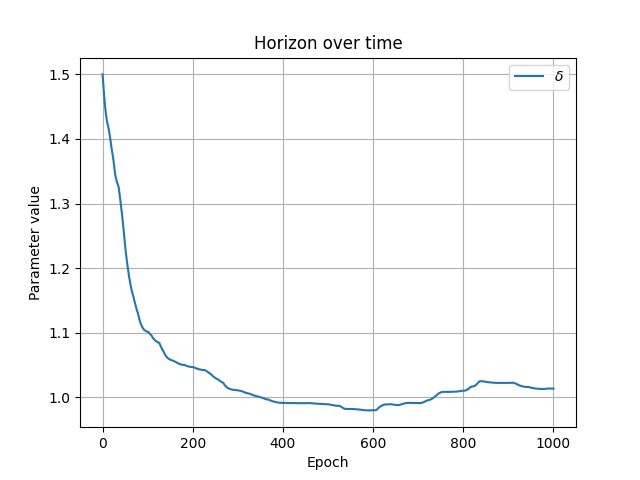}
\includegraphics[width=0.32\textwidth]{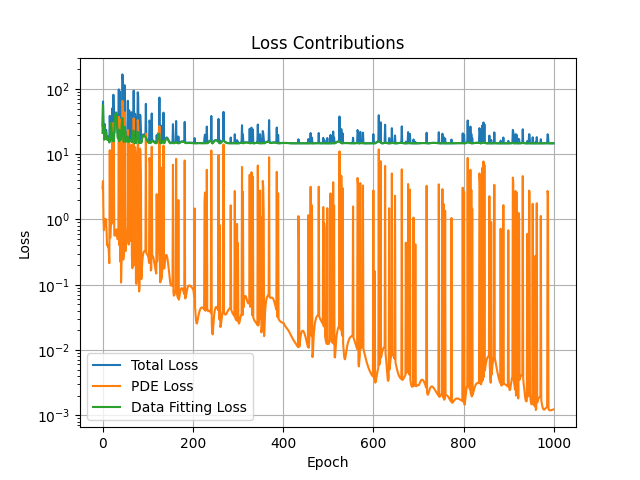}
\includegraphics[width=0.32\textwidth]{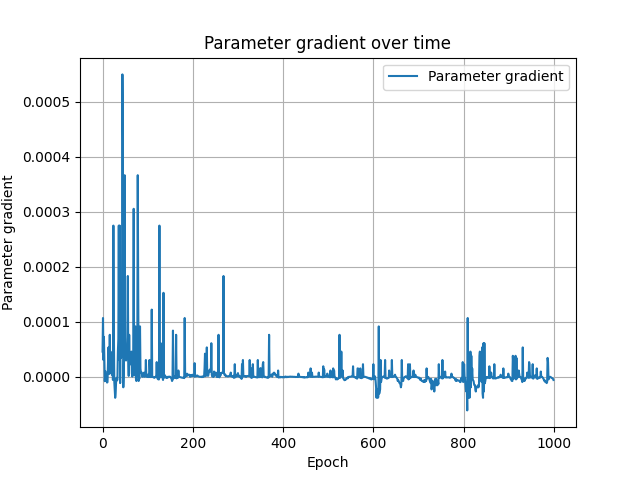}
\caption{Parameter learning, loss and gradient evolution for Example \ref{ex:data3} starting at $\delta=1.5$. The true value for the parameter is $\delta^*=1$. The loss function is $\mathcal{L}$ and the the learning rate follows a polynomial decay.}
\label{fig:data3_inVal1_5_meanSquared_polyDecay}
\end{figure}

For both previous cases, when starting at a subestimate $\delta=0.9<\delta^*$, we obtain a monotone divergence from the true value, as depicted in Figures \ref{fig:data3_inVal0_9_meanSquared} and \ref{fig:data3_inVal0_9_meanSquared_polyDecay}, where a constant learning rate and a polynomial decay has been chosen, respectively, with the same settings used for Figures \ref{fig:data3_inVal1_5_meanSquared} and \ref{fig:data3_inVal1_5_meanSquared_polyDecay}.

\begin{figure}[htb]
\centering
\includegraphics[width=0.32\textwidth]{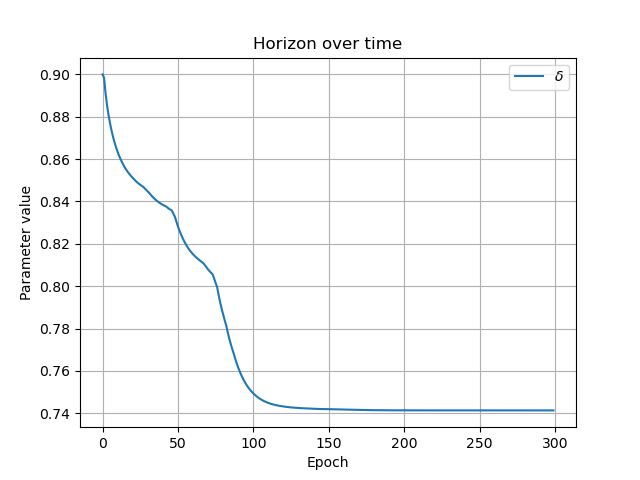}
\includegraphics[width=0.32\textwidth]{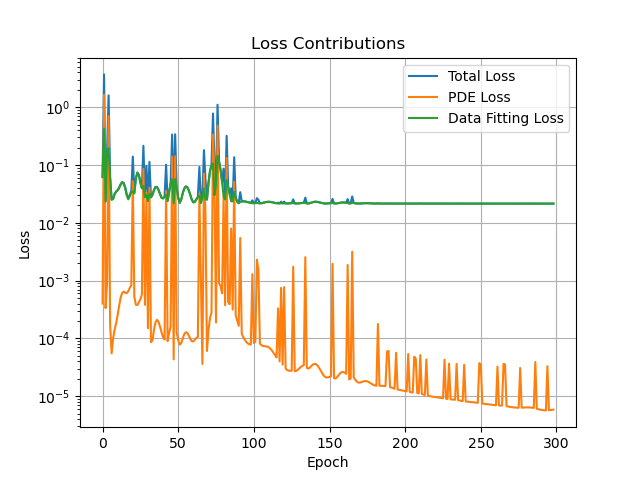}
\includegraphics[width=0.32\textwidth]{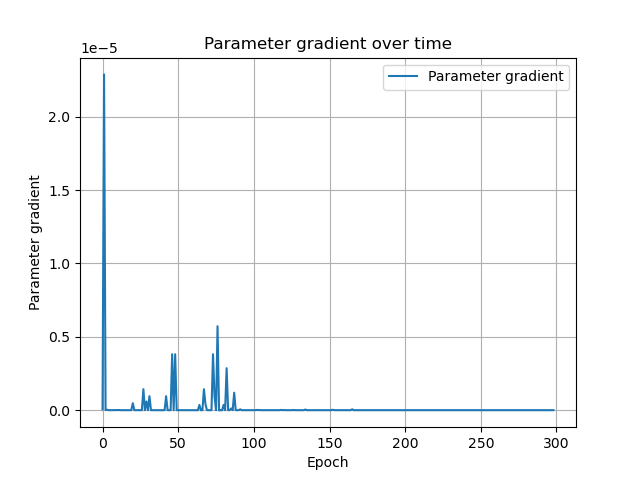}
\caption{Parameter learning, loss and gradient evolution for Example \ref{ex:data3} starting at $\delta=0.9$. The true value for the parameter is $\delta^*=1$. The loss function is $\mathcal{L}$ as in \eqref{eq:PINN_lossFunction} and the the learning rate is constant, set at $1e-2$.}
\label{fig:data3_inVal0_9_meanSquared}
\end{figure}

\begin{figure}[htb]
\centering
\includegraphics[width=0.32\textwidth]{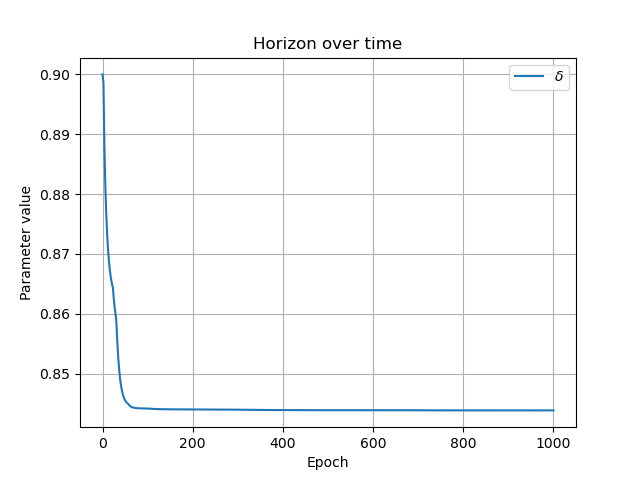}
\includegraphics[width=0.32\textwidth]{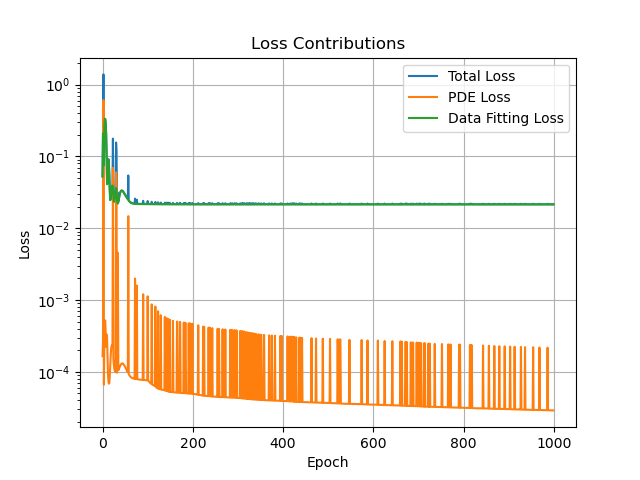}
\includegraphics[width=0.32\textwidth]{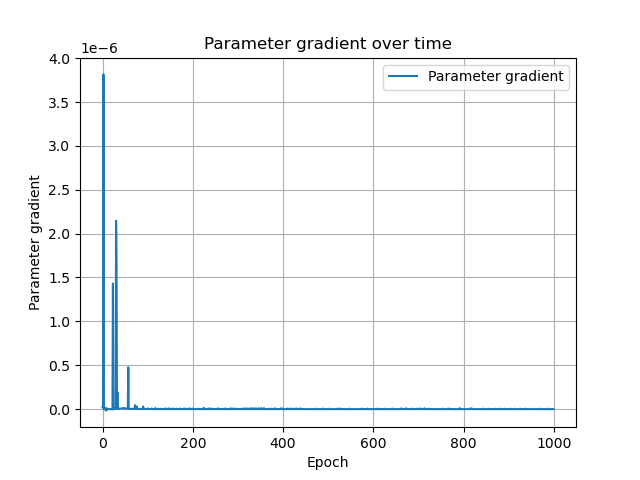}
\caption{Parameter learning, loss and gradient evolution for Example \ref{ex:data3} starting at $\delta=1.5$. The true value for the parameter is $\delta^*=1$. The loss function is $\mathcal{L}$ and the the learning rate follows a polynomial decay.}
\label{fig:data3_inVal0_9_meanSquared_polyDecay}
\end{figure}

Figure \ref{fig:data3_inVal1_5_norm} is obtained with an initial guess $\delta=1.5$ when minimizing the loss function $\mathcal{L}_2$ as in \eqref{eq:PINN_lossFunction_norm}. Here, a cyclical \texttt{PolynomialDecay} scheduler of degree 5 has been used for the learning rate, with an initial value of $1e-2$ decaying to a final value of $1e-4$ every 100 epochs, over a total number of 1000 epochs. As in previous example, the residual loss seems to be not minimized at $\delta^*$, suggesting the process may have reached a Pareto optimal that is not a global minimum.

\begin{figure}[htb]
\centering
\includegraphics[width=0.32\textwidth]{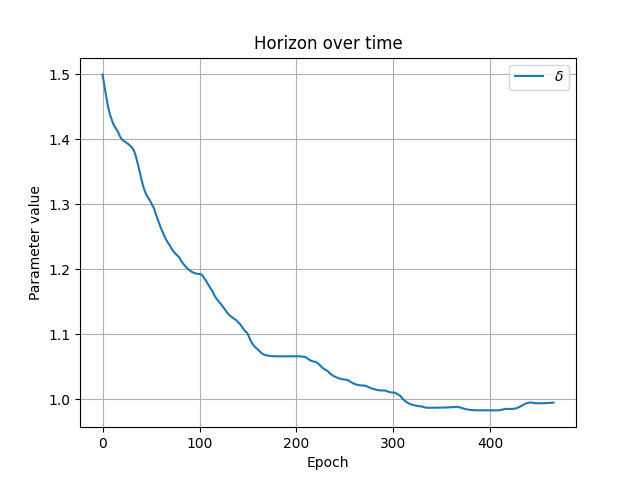}
\includegraphics[width=0.32\textwidth]{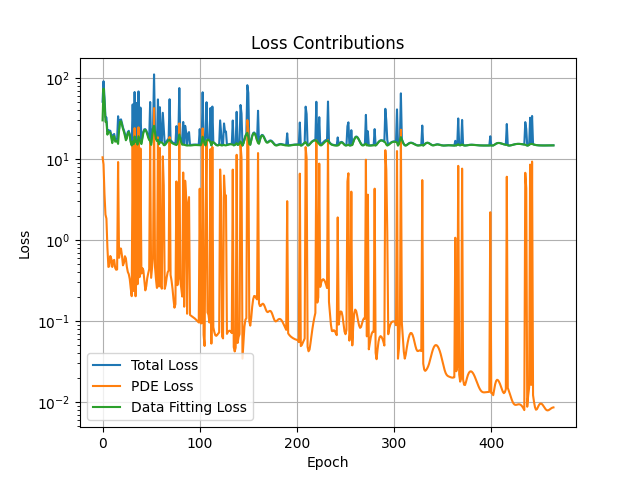}
\includegraphics[width=0.32\textwidth]{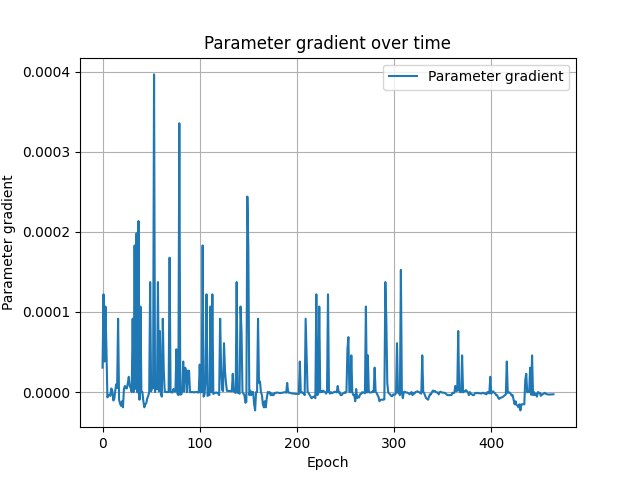}
\caption{Parameter learning evolution for Example \ref{ex:data3} with an initial guess $\delta=1.5$; the true value for the parameter is $\delta^*=1$. The loss function is $\mathcal{L}_2$ as in \eqref{eq:PINN_lossFunction_norm}, minimized using a polynomial decaying learning rate.}
\label{fig:data3_inVal1_5_norm}
\end{figure}

Again, starting at $\delta=0.9$, below $\delta^*=1$ ends up in divergence with an unreasonably high magnitude residual loss, as depicted in Figure \ref{fig:data3_inVal0_9_norm}, when the same polynomial decaying learning rate has been used is previous simulations.

\begin{figure}[htb]
\centering
\includegraphics[width=0.32\textwidth]{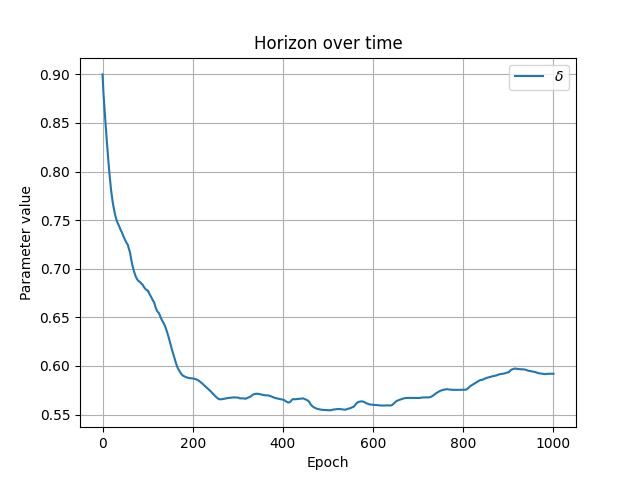}
\includegraphics[width=0.32\textwidth]{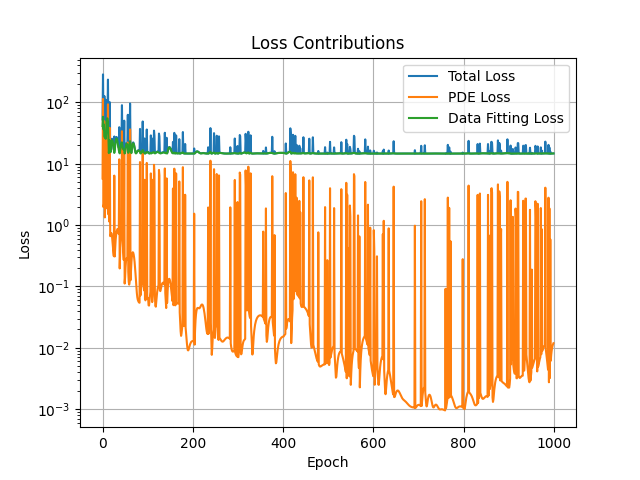}
\includegraphics[width=0.32\textwidth]{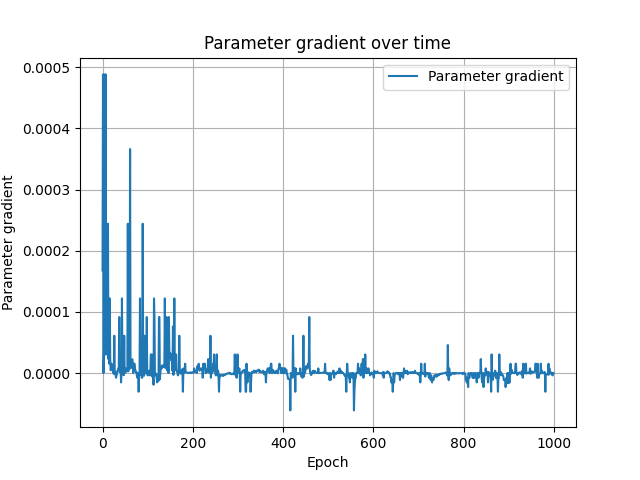}
\caption{Parameter learning evolution for Example \ref{ex:data3} with an initial guess $\delta=0.9$; the true value for the parameter is $\delta^*=1$. The loss function is $\mathcal{L}_2$ as in \eqref{eq:PINN_lossFunction_norm} and the learning rate follows a polynomial decay.}
\label{fig:data3_inVal0_9_norm}
\end{figure}

\end{exm}

\begin{exm}\label{ex:data8}
In this example, a kernel function of type \eqref{eq:tentKernel} is considered, with expression
\[
C(\xi)=\max\{0,\delta^*-|\xi|\}
\]
where $\delta^*=1$. \\
Figure \ref{fig:data8_inVal1_1_meanSquared} shows the convergence of the horizon to $\delta^*=1$ when starting at a superestimate $\delta=1.1$ and minimizing $\mathcal{L}$ as in \eqref{eq:PINN_lossFunction}; when minimizing $\mathcal{L}_2$ as in \eqref{eq:PINN_lossFunction_norm}, we get behaviors shown in Figure \ref{fig:data8_inVal1_1_norm}, where we can witness again what reported in Remark \ref{rem:norm}. For these results, a learning rate following a \texttt{CosineDecay} scheduler has been selected, setting the initial value to $1e-4$, decay steps equal to the number of epochs and no warm-up step.

\begin{figure}[htb]
\centering
\includegraphics[width=0.32\textwidth]{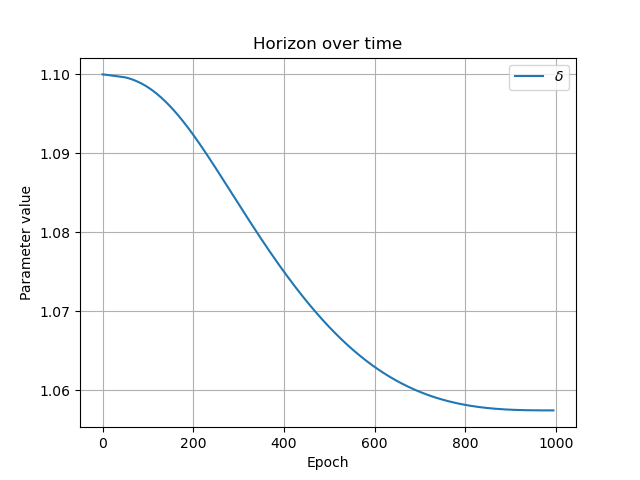}
\includegraphics[width=0.32\textwidth]{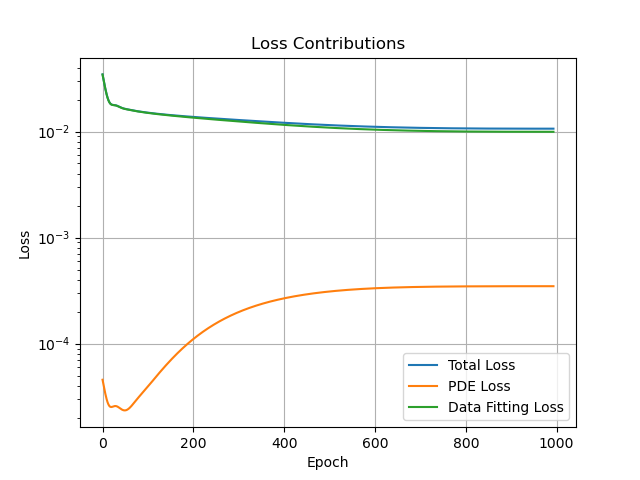}
\includegraphics[width=0.32\textwidth]{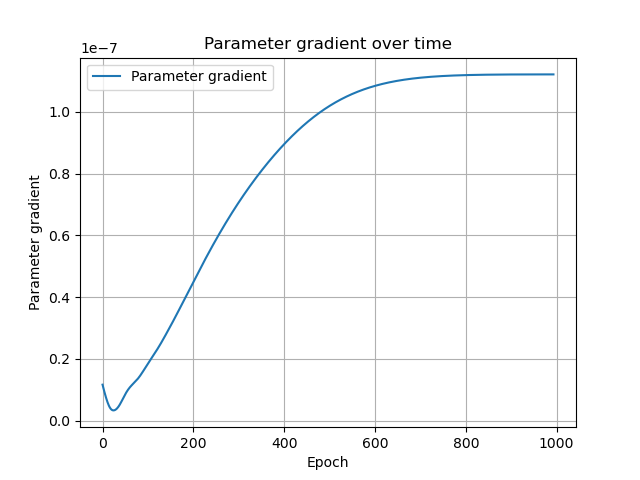}
\caption{Parameter learning evolution for Example \ref{ex:data8} with an initial guess $\delta=1.1$; the true value for the parameter is $\delta^*=1$. The loss function is $\mathcal{L}$ as in \eqref{eq:PINN_lossFunction} and the learning rate follows a cosine decay.}
\label{fig:data8_inVal1_1_meanSquared}
\end{figure}

\begin{figure}[htb]
\centering
\includegraphics[width=0.32\textwidth]{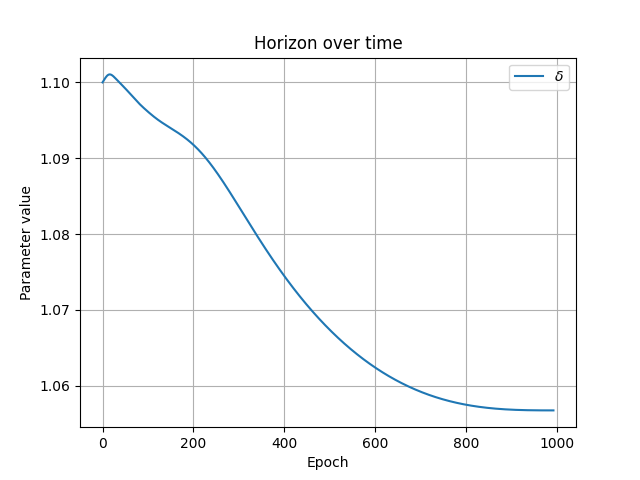}
\includegraphics[width=0.32\textwidth]{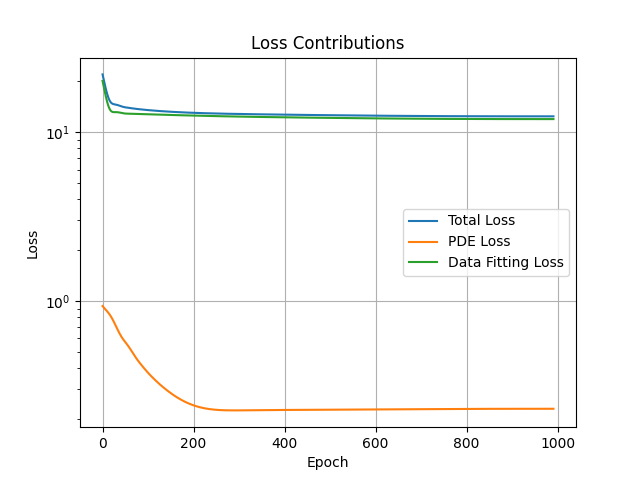}
\includegraphics[width=0.32\textwidth]{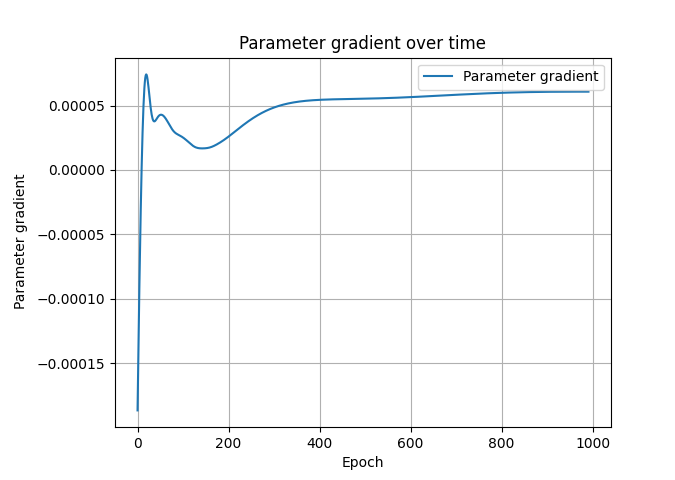}
\caption{Parameter learning evolution for Example \ref{ex:data8} with an initial guess $\delta=1.1$; the true value for the parameter is $\delta^*=1$. The loss function is $\mathcal{L}_2$ as in \eqref{eq:PINN_lossFunction_norm} and the learning rate follows a cosine decay.}
\label{fig:data8_inVal1_1_norm}
\end{figure}

Within the same setting as above, starting from a subestimate $\delta=0.9$ provides divergence, as depicted in Figure \ref{fig:data8_inVal0_9_meanSquared} for the minimization of $\mathcal{L}$, and in Figure \ref{fig:data8_inVal0_9_norm} for the minimization of $\mathcal{L}_2$.

\begin{figure}[htb]
\centering
\includegraphics[width=0.32\textwidth]{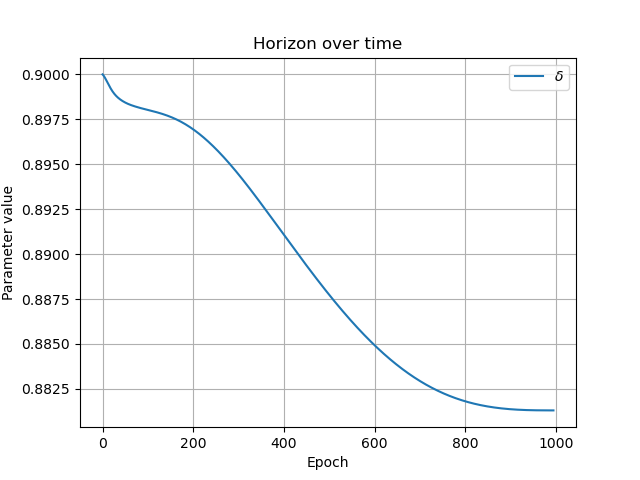}
\includegraphics[width=0.32\textwidth]{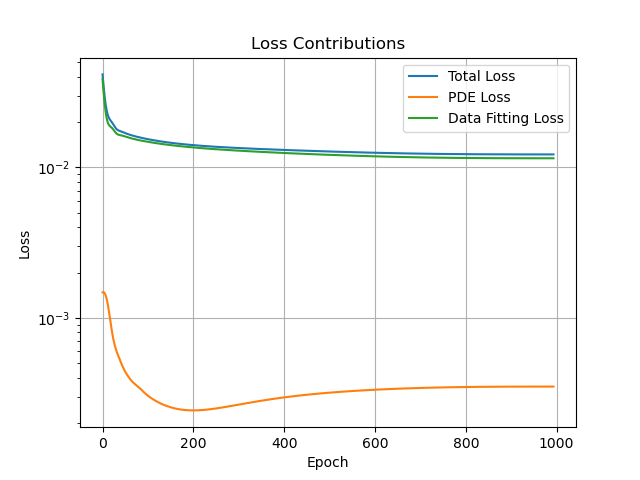}
\includegraphics[width=0.32\textwidth]{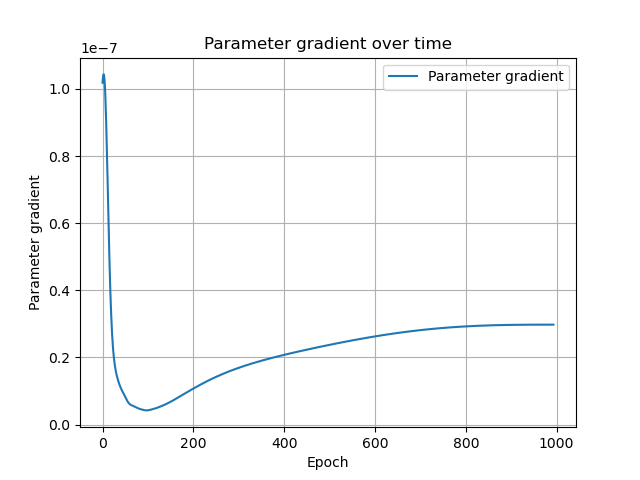}
\caption{Parameter learning evolution for Example \ref{ex:data8} with an initial guess $\delta=0.9$; the true value for the parameter is $\delta^*=1$. The loss function is $\mathcal{L}$ as in \eqref{eq:PINN_lossFunction} and the learning rate follows a cosine decay.}
\label{fig:data8_inVal0_9_meanSquared}
\end{figure}

\begin{figure}[htb]
\centering
\includegraphics[width=0.32\textwidth]{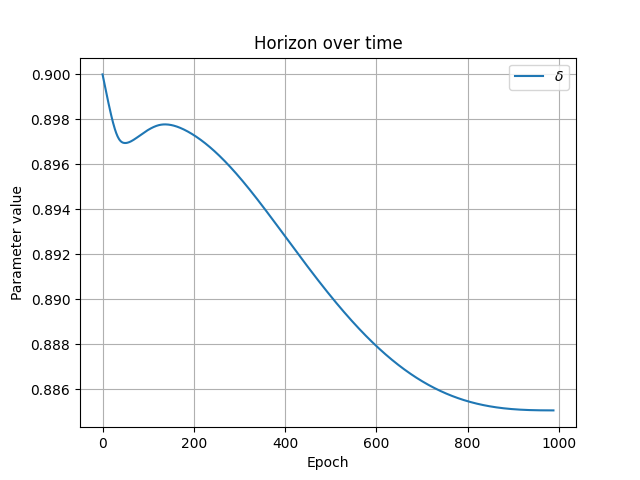}
\includegraphics[width=0.32\textwidth]{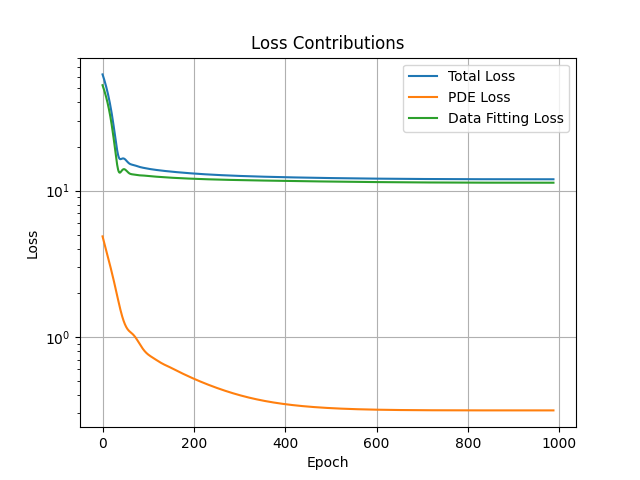}
\includegraphics[width=0.32\textwidth]{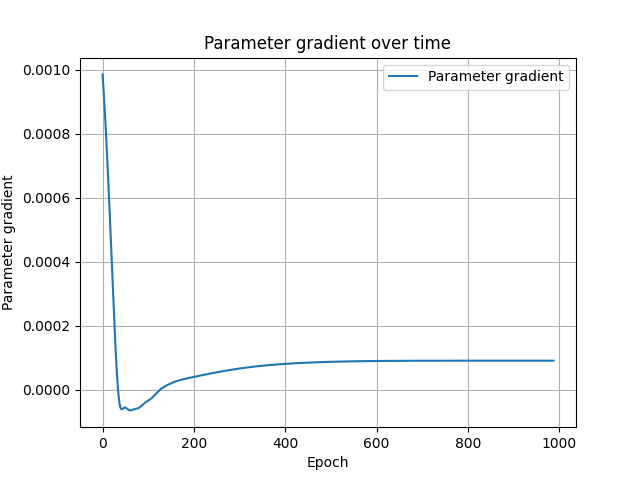}
\caption{Parameter learning evolution for Example \ref{ex:data8} with an initial guess $\delta=0.9$; the true value for the parameter is $\delta^*=1$. The loss function is $\mathcal{L}_2$ as in \eqref{eq:PINN_lossFunction_norm} and the learning rate follows a cosine decay.}
\label{fig:data8_inVal0_9_norm}
\end{figure}

\end{exm}

From previous example, we witness that convergence is attained starting from a superestimate of the true parameter value $\delta^*$. However, when $x\in\R^2$, while keeping a one-sided stability region, the convergence is obtained starting from subestimate of $\delta^*$, as reported in the next experiments.

\begin{exm}\label{ex:2d}
Let us consider the classical peridynamic equation of motion \cite{YangOterkusOterkus2024}
\[
\parder{^2\theta}{t^2}(x,y,t)=\frac{6c^2}{\pi\delta^3}\int_0^{2\pi}\int_0^\delta\frac{\theta(x+\xi\cos\varphi,y+\xi\sin\varphi,t)-\theta(x,y,t)}{\xi}\xi\de\xi\de\varphi+f(x,y),
\]
with
\[
f(x,y)\udef-0.05\sin\frac{\pi x}{a}\sin\frac{\pi y}{b},
\]
and initial and boundary conditions given by
\begin{align*}
\theta(-\xi,y) &= -\theta(\xi,y), \\
\theta(a+\xi,y) &= -\theta(a-\xi,y), \\
\theta(x,-\xi) &= -\theta(x,\xi), \\
\theta(x,b+\xi) &= -\theta(x,b-\xi),
\end{align*}
for $\xi\in[0,\delta]$. The exact solution is, in this case,
\[
\theta(x,y,t)=\frac{4}{ab}\frac{1}{c^2}\frac{\pi\delta^3}{6}\sum_{m=1}^\infty\sum_{n=1}^\infty\frac{\left[\int_0^b\int_0^af(x,y)\sin(\overline mx)\sin(\overline ny)\de x\de y\right]\sin(\overline mx)\sin(\overline ny)}{\int_0^{2\pi}\int_0^\delta\frac{1-\cos(\overline m\xi\cos\varphi)\cos(\overline n\xi\sin\varphi)}{\xi}\xi\de\xi\de\varphi},
\]
where $\overline{m}=\frac{\pi m}{a}$ and $\overline{n}=\frac{\pi n}{b}$. \\
Assuming $a=b=1\,\textup{m}$ and $c=1\,\textup{Nm}/\textup{kg}$, we run experiments to learn the value of $\delta>0$, whose true value has been chosen to be $\delta^*=0.1$. \\

For the minimization of $\mathcal{L}$ in \eqref{eq:PINN_lossFunction}, the learning rate for the results shown in Figure \ref{fig:2D_inVal0.095_meanSquared} has been chosen of \texttt{CosineDecay} type, with an initial value of $1e-3$, a decay step of 1000 and warm up step set to zero; the total number of epochs is 1000. \\
In this case, it can be seen that convergence to some value in a small neighborhood of $\delta^*$ is achieved in a monotonically increasing fashion, starting from a subestimate $\delta=0.1-0.005$. \\
Starting from a superestimate $\delta=0.1+0.005$ results in a divergence behavior, as shown in Figure \ref{fig:2D_super_meanSquared}. \\
\begin{figure}[htb]
\centering
\includegraphics[width=0.32\textwidth]{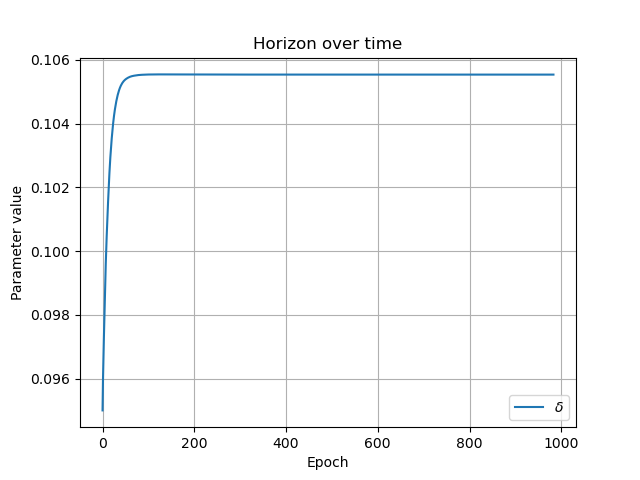}
\includegraphics[width=0.32\textwidth]{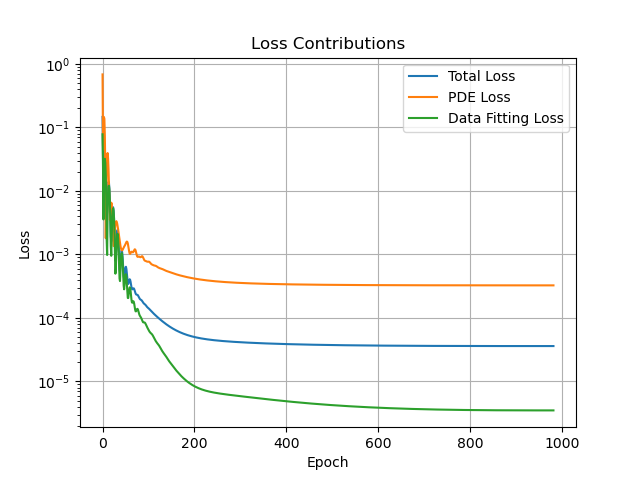}
\includegraphics[width=0.32\textwidth]{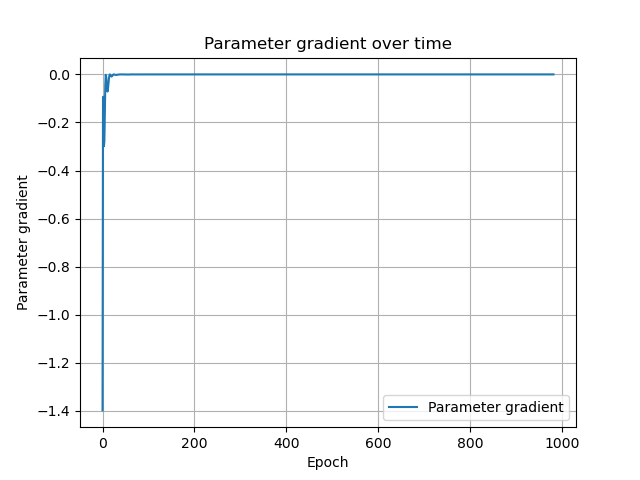}
\caption{Parameter learning for Example \ref{ex:2d} when starting at $\delta=0.095$; the true value for the parameter is $\delta^*=0.1$. The loss function is $\mathcal{L}$ as in \eqref{eq:PINN_lossFunction} and the learning rate follows a \texttt{CosineDecay} scheduler.}
\label{fig:2D_inVal0.095_meanSquared}
\end{figure}
\begin{figure}[htb]
\centering
\includegraphics[width=0.32\textwidth]{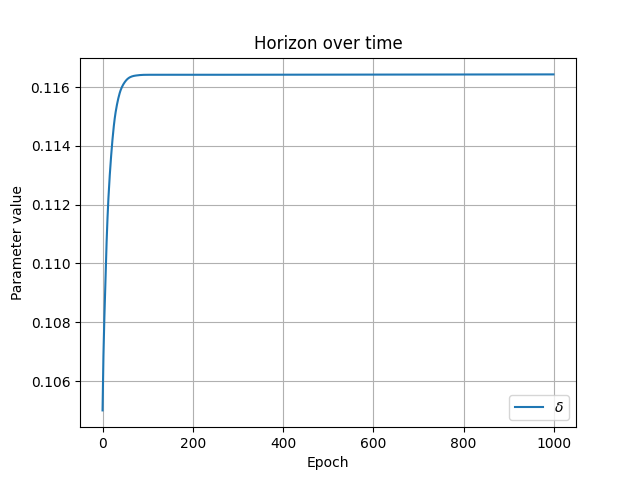}
\includegraphics[width=0.32\textwidth]{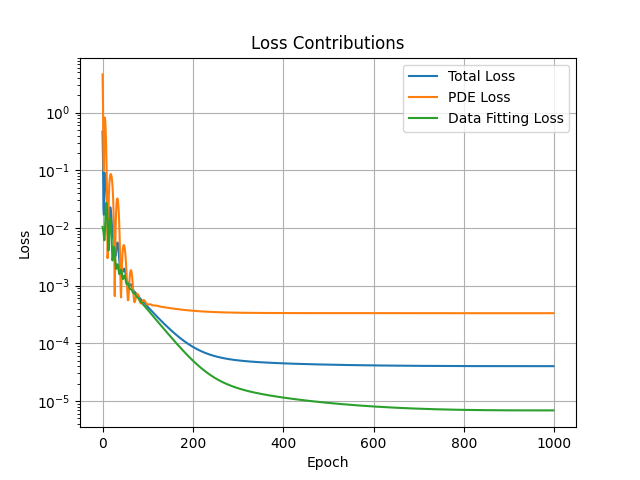}
\includegraphics[width=0.32\textwidth]{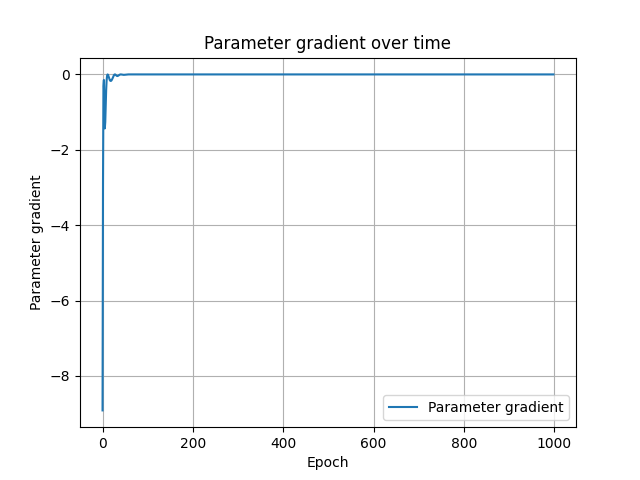}
\caption{Parameter learning for Example \ref{ex:2d} when starting at $\delta=0.105$; the true value for the parameter is $\delta^*=0.1$. The loss function is $\mathcal{L}$ as in \eqref{eq:PINN_lossFunction} and the learning rate follows a \texttt{CosineDecay} scheduler.}
\label{fig:2D_super_meanSquared}
\end{figure}

For the minimization of $\mathcal{L}_2$ in \eqref{eq:PINN_lossFunction_norm}, the learning rate for the results shown in Figure \ref{fig:2D_inVal0.095_norm} has been chosen of \texttt{CosineDecay} type, with an initial value of $1e-4$, a decay step of 1000 and warm up step set to zero; the total number of epochs is 1000. \\
In this case, it can be seen that convergence to some value in a small neighborhood of $\delta^*$ is achieved in a monotonically increasing fashion, starting from a subestimate $\delta=0.1-0.005$; however, we now notice stagnation for both residuals as $\delta^*$ is approached, as reported in Remark \ref{rem:norm}. \\
Starting from a superestimate $\delta=0.1+0.005$ results in a divergence behavior, as shown in Figure \ref{fig:2D_super_norm}.

\begin{figure}[htb]
\centering
\includegraphics[width=0.32\textwidth]{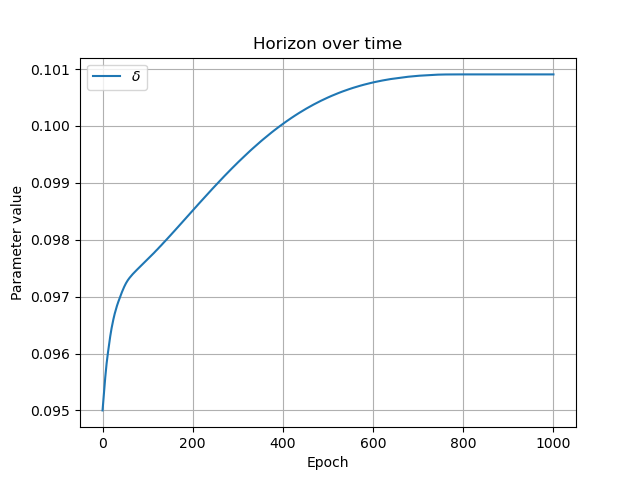}
\includegraphics[width=0.32\textwidth]{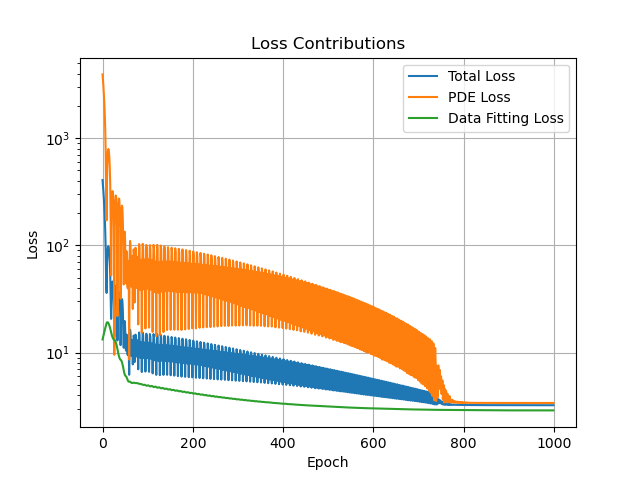}
\includegraphics[width=0.32\textwidth]{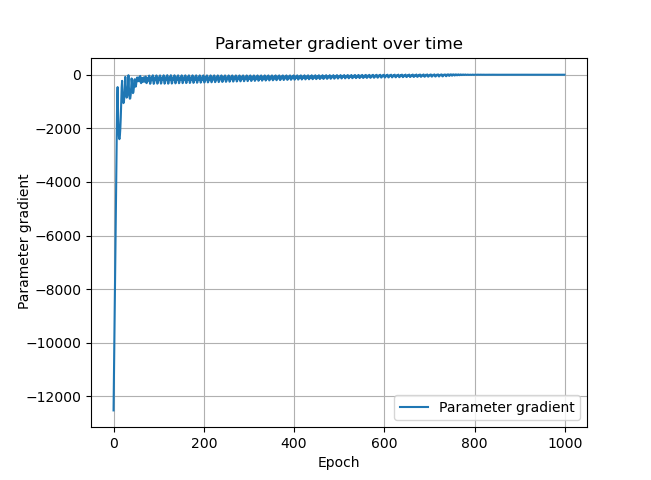}
\caption{Parameter learning for Example \ref{ex:2d} when starting at $\delta=0.095$; the true value for the parameter is $\delta^*=0.1$. The loss function is $\mathcal{L}_2$ as in \eqref{eq:PINN_lossFunction_norm} and the learning rate follows a \texttt{CosineDecay} scheduler.}
\label{fig:2D_inVal0.095_norm}
\end{figure}
\begin{figure}[htb]
\centering
\includegraphics[width=0.32\textwidth]{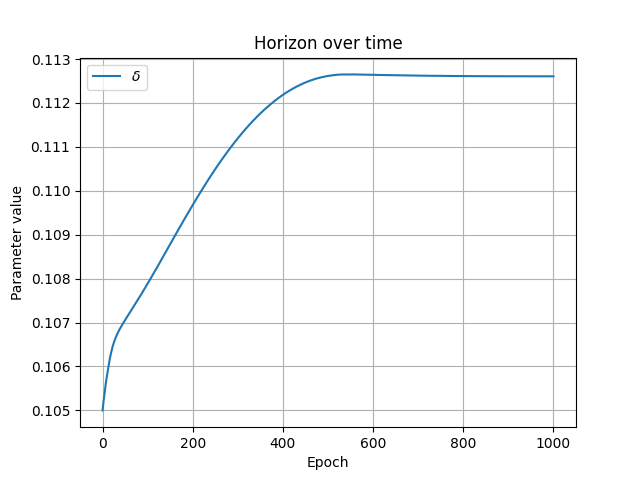}
\includegraphics[width=0.32\textwidth]{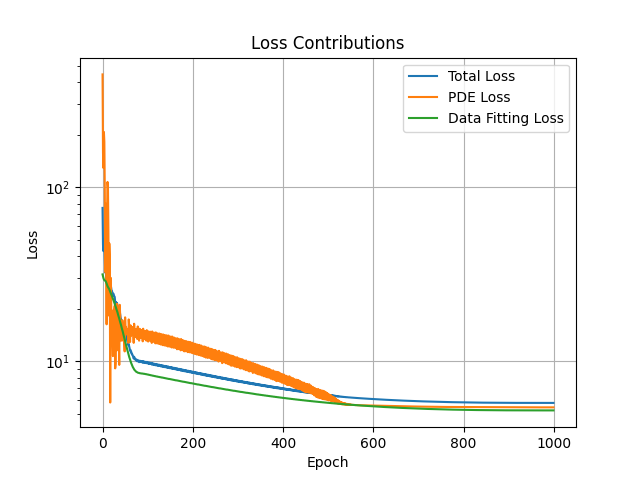}
\includegraphics[width=0.32\textwidth]{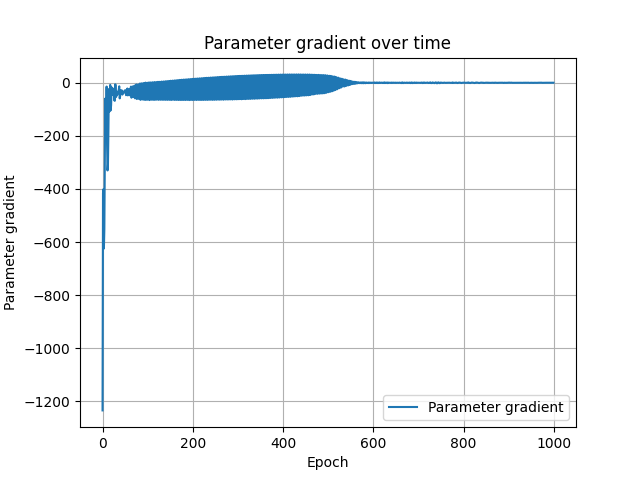}
\caption{Parameter learning for Example \ref{ex:2d} when starting at $\delta=0.105$; the true value for the parameter is $\delta^*=0.1$. The loss function is $\mathcal{L}_2$ as in \eqref{eq:PINN_lossFunction_norm} and the learning rate follows a \texttt{CosineDecay} scheduler.}
\label{fig:2D_super_norm}
\end{figure}
\end{exm}

From Example \ref{ex:2d}, it comes out that convergence occurs only when starting from a subestimate of the true value, while SGD diverges otherwise. This behavior is analogous but opposite than in the 1D case, where SGD provided convergence when starting from a superestimate. It would be a natural outcome to further investigate along this direction, and establish a general pattern for it.

\begin{rem}
It is worth stressing that, as long as the learning rate satisfies the conditions in Proposition \ref{prop:lossFunction} and Proposition \ref{prop:diffRes}, the learning process is expected to converge independently on the specific learning rate chosen for the simulation. In fact, this is what we have seen with our simulations, where different choices of the learning rate have provided graphs with a qualitative comparable behaviors, retaining the same salient properties relative to the convergence of the parameter $\delta$.
\end{rem}

\section{Conclusions}\label{sec:conclusion}

In this work we have tackled the problem to compute the horizon size of the kernel function in bond-based peridynamic 1D and 2D models. We have witnessed that there needs a consistent choice of the initial guess for achieving convergence. In order to explore this phenomenon, stemming from a multi-objective optimization analysis of the PINN loss function, we have first proved that a sufficiently wide neural network, under mild assumptions, is required to attain convergence to a global minimum in a neighborhood of the parameter initialization; then, we provided a result showing that the convergence is indeed monotone, and a bad choice of the initial guess results in a divergence behavior from the exact solution. The proof relies on the assumption that the neural network becomes more and more insensitive to the parameter as it approaches its limit value.

The theoretical results focus on a specific PINN architecture (euclidean loss) and might not hold true for other loss functions or network configurations. Exploring the behavior of PINNs with different learning strategies for event horizon identification is an important area for future research.

Overall, Theorem~\ref{thm:estimate} provides insights into the challenges and limitations of using PINNs to identify the event horizon size in peridynamic models. It highlights the importance of careful parameter initialization and the need for further research to develop more robust and generalizable approaches in this context.

Additionally, in order to perform a qualitative analysis of the PINN architecture with respect to more classical FEM approach, we plan to address the comparisons of these two methods in a future work.

\section*{Acknowledgments}

The three authors gratefully acknowledge the support of INdAM-GNCS 2023 Project, grant number CUP$\_$E53C22001930001, and INdAM-GNCS 2024 project, grant number CUP$\_$E53C23001670001. They are also part of the INdAM research group GNCS.

FVD and LL has been partially funded by PRIN2022PNRR n. P2022M7JZW \emph{SAFER MESH - Sustainable mAnagement oF watEr Resources ModEls and numerical MetHods} research grant, funded by the Italian Ministry of Universities and Research (MUR) and  by the European Union through Next Generation EU, M4C2, CUP H53D23008930001.

SFP has been supported by \textit{PNRR MUR - M4C2} project, grant number N00000013 - CUP D93C22000430001.

The authors want to thank the anonymous reviewers for their comments, that helped to improve the quality of the paper.

\bibliographystyle{plain}
\bibliography{periPINN_rev1.bib}


\end{document}